\newcommand{\indentalign}{\hspace{0.3in}&\hspace{-0.3in}}
\newcommand{\la}{\langle}
\newcommand{\ra}{\rangle}
\newcommand{\defeq}{\stackrel{\rm{def}}{=}}
\newtheorem{theorem}{Theorem}
\newtheorem{proposition}[theorem]{Proposition}
\newtheorem{lemma}[theorem]{Lemma}
\newtheorem{corollary}[theorem]{Corollary}
\theoremstyle{remark}
\numberwithin{equation}{section}
\numberwithin{theorem}{section}
\title[Low regularity bounds for mKdV] {Low regularity 
\textit{a priori} bounds for the modified Korteweg-de Vries equation}
\author{Michael Christ}
\address{University of California, Berkeley}
\email{mchrist@math.berkeley.edu} 
\author{Justin Holmer}
\address{Brown University}
\email{holmer@math.brown.edu}
\author{Daniel Tataru}
\address{University of California, Berkeley}
\email{tataru@math.berkeley.edu} 
\begin{document}

\maketitle

\begin{abstract}
  We study the local well-posedness in the Sobolev space
  $H^s(\mathbb{R})$ for the modified Korteweg-de Vries (mKdV) equation
  $\partial_t u + \partial_x^3 u \pm \partial_x u^3=0$ on
  $\mathbb{R}$.  Kenig-Ponce-Vega \cite{KPV2} and
  Christ-Colliander-Tao \cite{CCT1} established that the
  data-to-solution map fails to be uniformly continuous on a fixed
  ball in $H^s(\mathbb{R})$ when $s<\frac14$.  In spite of this, we
  establish that for $-\frac18 < s <\frac14$, the solution satisfies
  global in time $H^s(\mathbb{R})$ bounds which depend only on the
  time and on the $H^s(\mathbb{R})$ norm of the initial data.  This
  result is weaker than global well-posedness, as we have no control on
  differences of solutions.  Our proof is modeled on recent work by
  Christ-Colliander-Tao \cite{CCT2} and Koch-Tataru \cite{KT}
  employing a version of Bourgain's Fourier restriction spaces adapted
  to time intervals whose length depends on the spatial frequency.
\end{abstract}

\section{Introduction}

We study the well-posedness of the initial-value problem for the
modified Korteweg-de Vries (mKdV) on $\mathbb{R}$:
\begin{equation}
  \label{E:mKdV}
  \partial_tu + \partial_x^3 u \pm  \partial_x u^3 =0, \qquad u(0) = u_0
\end{equation}
where $u=u(x,t)\in \mathbb{R}$ with $(x,t)\in \mathbb{R}^{1+1}$.  This
equation has scaling 
\[
u(x,t) \mapsto \lambda u(\lambda x,\lambda^3t)
\]
and the scale invariant homogeneous Sobolev norm is $\dot
H^{-\frac12}$.  The equation is globally well-posed in $H^s$ for
$s\geq \frac14$.  Specifically, given initial data in $H^s$, a
solution exists in $C([0,+\infty); H^s)\cap X$, where $X$ is a certain
auxiliary function space; this solution is unique among all solutions
that reside in this function class; and for any $T>0$, the
data-to-solution map from a fixed ball in $H^s$ to $C([0,T];H^s)$ is
uniformly continuous.  The local result was proved by 
Kenig-Ponce-Vega \cite{KPV1} by the contraction method in a 
function space where
several dispersive estimates for the linear flow hold.  An alternate
proof in the setting of the Fourier restriction norm spaces was given
later in Tao \cite{Tao}.  Colliander-Keel-Staffilani-Takaoka-Tao
\cite{CKSTT} proved that this local solution extends to a global
solution by studying the almost conservation of of the norm of a high
frequency-damped copy of the solution (the $I$-method).  On the other
hand, for $s<\frac14$, \eqref{E:mKdV} on $\mathbb{R}$ is ill-posed in
the sense that the data-to-solution map fails to be \emph{uniformly}
continuous on a fixed ball in $H^s$.  This was established by
Kenig-Ponce-Vega \cite{KPV2} for the focusing equation ($+$ sign in
front of the nonlinearity; Theorem 1.3 on p. 623 of their paper), and
by Christ-Colliander-Tao \cite{CCT1} for the defocusing equation ($-$
sign in front of the nonlinearity; Theorem 4 on p. 1240 of their
paper)\footnote{The proof given by \cite{CCT1} holds for $-\frac14 <
  s< \frac14$, but the authors remark that the restriction to
  $s>-\frac14$ is likely an artifact of their method.}.  This leaves
open the question as to whether or not there is a well-posedness
result for $s<\frac14$ giving only the continuity (as opposed to
\emph{uniform} continuity) of the data-to-solution map.  One result in
this direction is Kato \cite{Kato}, where global weak solutions for
$s=0$ are constructed.  We will here prove another result in this
direction, giving an \emph{a priori} bound in $H^s$ for
$-\frac18<s<\frac14$ in terms of the $H^s$ norm of the initial data
but establishing no continuity.
Our method is analogous to that in Christ-Colliander-Tao \cite{CCT2}
and Koch-Tataru \cite{KT} dealing with the nonlinear Schr\"odinger
equation (NLS) on $\mathbb{R}$. The related problem for the 
mKdV equation was considered by Liu~\cite{bao}.

\begin{theorem}
  \label{T:main}
  Let  $-\frac18<s<\frac14$.  Then for any $R > 0$ and $T > 0$ there
  exists\footnote{The proof actually yields $C = \max \{ 1,
    R^{-\frac{8s}{1+8s}} T^{-\frac{s}{1+8s}}\}$ but this is very likely 
nonoptimal.}
 $C=C(R,T)>0$ so that for any initial data $u_0\in
  \mathcal{S}$ satisfying 
\[
\|u_0\|_{H^s} \leq R \, ,
\]
the unique solution $u\in C([0,T]; \mathcal{S})$ to \eqref{E:mKdV}
(focusing or defocusing) satisfies
\[
\|u\|_{L_{[0,T]}^\infty H_x^s} \leq C \|u_0\|_{H^s} \, .
\]
\end{theorem}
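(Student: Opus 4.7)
The plan is to follow the short-time Bourgain space strategy developed in \cite{CCT2, KT}, adapted to the cubic Airy dispersion of mKdV. Since the initial data are Schwartz, the classical theory of Kenig--Ponce--Vega \cite{KPV1} produces a smooth global solution, so the only issue is to obtain a quantitative $H^s$ bound. I would set up a single master norm $\|u\|_{F^s([0,\tau])}$ which controls both $\|u\|_{L^\infty_t H^s_x}$ and the requisite short-time dispersive norms, and run a continuity argument: show that if this master norm is at most $M = M(R)$ on a subinterval $[0,\tau_0]$, then it is in fact bounded by $M/2$, which permits extension up to a fixed length $\tau$. The stated constant $C(R,T)$ then comes from iterating over $T/\tau$ such time steps, with $\tau \sim R^{-8/(1+8s)}$ matching the exponent indicated in the footnote.

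The function spaces are frequency-localized. With a Littlewood--Paley decomposition $u = \sum_k P_k u$, define for each dyadic scale $N = 2^k$ a Bourgain-type space $X_k$ on time intervals of length $N^{-\alpha}$, where the exponent $\alpha$ reflects the cubic dispersion relation $\xi \mapsto \xi^3$ of the Airy symbol. Assemble these into $F^s$, measured in $\ell^2_k$ with weights $2^{sk}$, and let $N^s$ be the analogous nonlinearity space. The reason for working on intervals of length $N^{-\alpha}$ rather than a fixed time interval is that on such short slabs the Airy linear estimates --- sharp Strichartz, maximal function of type $L^4_x L^\infty_t$, and local smoothing of type $L^\infty_x L^2_t$ --- all hold with no loss in regularity, whereas their global-in-time $X^{s,b}$ counterparts would fail in the range $s < \tfrac14$. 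One thus obtains a clean linear bound $\|u\|_{F^s} \lesssim \|u_0\|_{H^s} + \|\partial_x(u^3)\|_{N^s}$.

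The core of the argument is a trilinear estimate $\|\partial_x(u_1 u_2 u_3)\|_{N^s} \lesssim \prod_i \|u_i\|_{F^s}$ for $-\tfrac18 < s < \tfrac14$. Dyadically decomposing the three factors and the output, and using the cubic resonance identity $\xi^3 - \xi_1^3 - \xi_2^3 - \xi_3^3 = 3(\xi_1+\xi_2)(\xi_1+\xi_3)(\xi_2+\xi_3)$ with $\xi = \xi_1 + \xi_2 + \xi_3$, the interactions split into non-resonant regimes (handled by the $X^{s,b}$ structure together with bilinear Airy and maximal function estimates) and an essentially resonant regime where two of the input frequencies nearly cancel, so that the contribution at output frequency $\sim N$ looks like $|u_{\lo}|^2 \partial_x u_{\hi}$. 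The latter is a variable-coefficient Airy-type perturbation; here the short time length $N^{-\alpha}$ is exactly what is needed to offset the derivative loss by combining local smoothing on the high-frequency factor with maximal-function control on the low-frequency ones. This resonant interaction is the main obstacle in the proof, and controlling it is precisely what forces the lower endpoint $s > -\tfrac18$.

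With the trilinear bound in hand, a frequency-localized energy estimate applied to $\|P_k u\|_{L^2}^2$, possibly after a normal-form or modified-energy correction to absorb the worst self-interaction terms (those in which two input frequencies cancel exactly), controls the $H^s$ growth by an expression of the form $\int_0^\tau \|u\|_{F^s}^3$ weighted by frequency. Combined with the linear estimate above, this closes the bootstrap on $\|u\|_{F^s}$ provided $\tau$ is chosen small in terms of $R$; iterating over $[0,T]$ yields the claimed a priori bound. The principal technical difficulty is the trilinear estimate in the resonant region, where the tuning of the short-time scale $N^{-\alpha}$, the precise combination of dispersive norms packaged in $X_k$, and the choice of modified energy all have to fit together simultaneously.
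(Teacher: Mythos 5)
Your core architecture matches the paper's: frequency-localized spaces on time intervals of length $N^{4s-1}$ (your $N^{-\alpha}$ with $\alpha=1-4s$), a linear estimate, a trilinear estimate built on the resonance factorization and the bilinear/local-smoothing/maximal-function Airy bounds, and a modified energy in the spirit of the $I$-method, all closed by a bootstrap. Two points, however, need attention. First, and most seriously, your passage from the small-data short-time estimate to the stated large-data bound on $[0,T]$ is not viable as described. Shrinking the time step $\tau$ does not make $\|u_0\|_{H^s}$ small, and smallness of the \emph{data norm} (not of the time interval) is what is needed to close the bootstrap; moreover, iterating over $T/\tau$ steps compounds the multiplicative constant and would give a bound exponential in $T/\tau$, not the polynomial constant $R^{-8s/(1+8s)}T^{-s/(1+8s)}$ in the footnote. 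The paper instead introduces the modified norms $H^s_M$ (flat below frequency $M$) and combines scaling with a choice of frequency threshold: after rescaling $u_\lambda(x,t)=\lambda u(\lambda x,\lambda^3 t)$ with $\lambda^3=T$, one has $\|u_{0\lambda}\|_{H^{-1/8}_{\lambda M}}\lesssim \lambda^{3/8}RM^{-1/8-s}$, which is made small by taking $M\gg M(R,T)$; the small-data proposition is then applied \emph{once} on the unit time interval, and the loss $M(R,T)^{-s}$ from comparing $H^s_M$ with $H^s$ is exactly the constant in the footnote. Without this (or an equivalent device), your argument does not reach large $R$ or large $T$.

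Second, a smaller misattribution: you locate the $s>-\tfrac18$ threshold in the resonant regime of the trilinear estimate, but in the paper the trilinear estimate holds down to $s>-\tfrac17$ (and one case down to $-\tfrac16$); the $-\tfrac18$ restriction arises in the energy estimate, from the six-linear remainder $R_6$ produced by the quartic correction $E_1$ to the energy (specifically the case where the four lowest frequencies are all well below the two highest). Relatedly, your bootstrap is phrased as a continuity argument in the time endpoint $\tau_0$, whereas the paper runs continuity in the amplitude parameter $h$ of the rescaled data $hu_0$ on the fixed interval $[0,1]$; either can be made to work, but the latter meshes directly with the scaling step above. Finally, note that recovering the $L^\infty_t H^s$ bound from the modified energy requires summing a family of weighted energies $H^{a_N}$ in $\ell^2$ over dyadic $N$, a point your sketch elides but which dictates the symbol class used for the correction term.
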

We note that our proof also applies for $s = -\frac18$, but with a 
$C$ which depends on the full $H^{-\frac18}$ frequency envelope
of $u$. This dependence is likely nonoptimal, and it would simplify
once the $-1/8$ threshold is crossed.

We also note that in the process of establishing the above result 
we also prove that the solutions belong to a smaller space $X^s$ 
defined later in the paper. 

An easy consequence of our result is the existence of weak solutions
for $H^s$ data:

\begin{corollary}
Given any initial data $u_0 \in H^s$, there exists a global solution 
$u$ to \eqref{E:mKdV} which solves the equation in the sense of distributions
and satisfies 
\[
\|u(t)\|_{H^s} \lesssim C(t, \|u_0\|_{H^s})
\]
with $C$ as in the theorem above.
\end{corollary}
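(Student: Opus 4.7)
The plan is to obtain the weak solution as a distributional limit of smooth solutions whose existence and uniform bounds are supplied by Theorem~\ref{T:main}. Given $u_0 \in H^s$, I would approximate it by a sequence $u_0^{(n)} \in \mathcal{S}$ with $u_0^{(n)} \to u_0$ in $H^s$ and $\|u_0^{(n)}\|_{H^s} \le 2\|u_0\|_{H^s}$ (for instance by smooth frequency truncation). For each $n$, mKdV is globally well-posed in $\mathcal{S}$, so there is a unique global Schwartz solution $u^{(n)} \in C([0,\infty);\mathcal{S})$. Theorem~\ref{T:main} then yields, on any fixed time interval $[0,T]$, the uniform bound
\[
\|u^{(n)}\|_{L^\infty_{[0,T]} H^s_x} \le C(T, \|u_0\|_{H^s}),
\]
and, as remarked immediately after the theorem, a corresponding uniform bound in the auxiliary space $X^s$.

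Next, I would extract a subsequence (not relabeled) such that $u^{(n)} \rightharpoonup u$ weak-$\ast$ in $L^\infty_{[0,T]}H^s_x$, and in fact weakly in $X^s$, for each $T$. A standard diagonal argument over $T \to \infty$ produces a single limit $u$ defined on all of $\mathbb{R}_+$ with the desired bound
\[
\|u(t)\|_{H^s} \lesssim C(t,\|u_0\|_{H^s}).
\]
The linear terms $\partial_t u^{(n)}$ and $\partial_x^3 u^{(n)}$ pass to the limit in $\mathcal{D}'$ immediately by weak-$\ast$ convergence.

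The main obstacle is passing to the limit in the nonlinear term $\partial_x(u^{(n)})^3$. Since weak convergence does not commute with the cubic map, I would need strong convergence of $u^{(n)}$ on compact sets in space-time, at least in $L^3_{\mathrm{loc}}$. For this I plan to exploit the equation itself: the uniform $H^s$ bound controls the spatial smoothness, while the equation gives a uniform bound on $\partial_t u^{(n)}$ in a space of the form $H^{s-3}_{\mathrm{loc}}$ (the cubic nonlinearity is harmless on any compact set because one only needs, say, $H^{s-3}_{\mathrm{loc}}$ control). An Aubin--Lions-type compactness argument, applied on each spatial ball and time interval, then upgrades weak convergence to strong convergence in $L^3_{\mathrm{loc}}([0,T] \times \mathbb{R})$ after passing to a further subsequence. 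The cube $(u^{(n)})^3$ therefore converges to $u^3$ in $L^1_{\mathrm{loc}}$, and its distributional derivative converges to $\partial_x u^3$. In the $X^s$ setting the trilinear estimates that underlie the proof of Theorem~\ref{T:main} should provide an even cleaner route, since they directly give continuity of the map $u \mapsto \partial_x u^3$ from $X^s$ into a distributional space compatible with weak-$\ast$ limits; I would use whichever formulation is more convenient. Either way, the resulting $u$ satisfies $\partial_t u + \partial_x^3 u \pm \partial_x u^3 = 0$ in $\mathcal{D}'(\mathbb{R}\times \mathbb{R}_+)$, and the initial condition $u(0)=u_0$ is recovered from the convergence of traces, completing the proof.
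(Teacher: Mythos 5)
Your overall strategy (weak limit of Schwartz solutions with uniform bounds supplied by Theorem~\ref{T:main}) matches the paper's, but your principal mechanism for passing to the limit in the nonlinearity has a genuine gap in the regime where the corollary is actually of interest. For $s<\frac16$, and in particular for the negative $s$ in the theorem's range, $H^s(\mathbb{R})$ does not embed into $L^3_{\mathrm{loc}}$, so the uniform $L^\infty_t H^s_x$ bound gives no control whatsoever on $(u^{(n)})^3$. Consequently the step ``the equation gives a uniform bound on $\partial_t u^{(n)}$ in $H^{s-3}_{\mathrm{loc}}$'' is unjustified: the linear terms are fine, but $\partial_x (u^{(n)})^3$ is not bounded in any negative Sobolev space by the $H^s$ bound alone. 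Moreover, even granting such a bound, an Aubin--Lions argument starting from a bounded set in $H^s$ with $s\le 0$ only yields strong convergence in $H^{s-\epsilon}_{\mathrm{loc}}$, which still does not allow you to conclude $(u^{(n)})^3\to u^3$ in $\mathcal{D}'$. The paper flags exactly this obstruction: the uniform local $H^s$ bound does not suffice to verify the equation in the sense of distributions.

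The repair is precisely the alternative you mention only in passing: the construction in the proof of Theorem~\ref{T:main} also produces a uniform bound in the auxiliary space $X^s_M$, and it is this bound --- through the Strichartz-type embeddings of \S\ref{S:basic} and the trilinear estimate of \S\ref{S:trilinear}, which make $u^3$ and $\partial_x u^3$ well defined and controlled in spaces compatible with taking limits --- that allows the nonlinear term to pass to the limit. If you promote that remark from an aside to the centerpiece of the argument (uniform $X^s_M$ bounds for $u^{(n)}$, strong local convergence of frequency-truncated pieces, control of the high-frequency tails in the Strichartz norms dominated by $X^s_M$), you recover the paper's intended proof. As written, the Aubin--Lions route is presented as the main argument, and it fails precisely where the corollary says something new.
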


The weak solution is constructed as a weak limit of strong solutions.
The uniform local $H^s$ bound does not suffice in order to verify that 
the equation is verified in the sense of distributions. Instead, this is true
due  the  uniform $X^s$ bound, which is also implicit in the construction.
We refer to these solutions as weak solutions as we currently do not
have any uniqueness or continuous dependence result in $H^s$ for 
$s < -\frac14$.

Currently the analogous problem for the periodic mKdV (\eqref{E:mKdV}
with $(x,t)\in \mathbb{T}\times \mathbb{R}$) is better understood.
The threshold of $s=\frac14$ for mKdV on $\mathbb{R}$ is replaced by
$s=\frac12$ for mKdV on $\mathbb{T}$.  Kappeler-Topalov \cite{KapTop}
construct, via inverse scattering theory, global solutions in $L^2$.
Tsutsumi-Takaoka \cite{TT} construct solutions for data in $H^s$ for
$\frac38<s<\frac12$ via Fourier restriction norm estimates and a
nonlinear ansatz.  Both of these results assert the continuity of the
data-to-solution map.

Regarding our result, we believe that in principle, by adding another
correction term (or maybe more) to the modified energy in \S
\ref{S:energy}, we could improve the lower threshold to $s\geq
-\frac16$ since the trilinear $\ell^2U_A^{s,2}$ estimate in \S
\ref{S:trilinear} is valid down to this threshold.  It seems that to
push to $s<-\frac16$ would require a better understanding of
``diagonal'' or ``resonant'' frequency interactions.  We do not know
if there is any significance to the number $s=-\frac16$ in regard to
the actual behavior of solutions or whether it is just an artifact of
our method.

An outline of the paper is as follows.  In \S\ref{S:norms}, we define
the function spaces employed in the analysis.  We use the $U^p$ and $V^p$
spaces, originally  introduced to this subject in unpublished work of Tataru
and then in  Koch-Tataru \cite{KT}, since they
are ideally suited to time-truncations.  In \S\ref{S:basic}, we discuss
the fundamental dispersive estimates employed in the proofs of the
trilinear estimate and the energy bound.  These include the Strichartz
estimates, local smoothing and maximal function estimates, and
Bourgain's bilinear ``refined Strichartz'' estimates.  In
\S\ref{S:trilinear}, the trilinear estimate is proved along the lines
of Christ-Colliander-Tao \cite{CCT2} and Koch-Tataru \cite{KT}.  In
\S\ref{S:energy}, an energy bound is obtained on a
high-frequency-damped energy functional.  The method here is
essentially an adaptation of the $I$-method of
Colliander-Keel-Staffilani-Takaoka-Tao \cite{CKSTT}.  
Our method does not establish any analogue of this energy
bound for  \emph{differences} of solutions, which is
the reason we cannot obtain a full well-posedness result in $H^s$,
$-\frac18<s<\frac14$.  Finally, in \S\ref{S:proof}, the components are
brought together to give a proof of Theorem \ref{T:main}.

In the conclusion of the introduction we give a heuristic that
explains why, when $s<\frac14$, we expect a piece of the solution at
frequency $N\gg 1$ to propagate according to \emph{linear} dynamics
for at least a time $N^{4s-1} \ll 1$.  Solutions to the linear
equation satisfy the Strichartz estimate (see Lemmas
\ref{L:Strichartz}, \ref{L:smooth_max} below)
\begin{equation}
  \label{E:Strtemp}
  \| D_x^{1/6} e^{-t\partial_x^3}\phi\|_{L_t^6L_x^6} \lesssim \|\phi\|_{L^2} \,.
\end{equation}
Now suppose $u$ is a solution to \eqref{E:mKdV} which is localized at
frequency $N \gg 1$, and suppose $u\approx e^{-t\partial_x^3}\phi$ on
$[0,T]$, with $\| \phi\|_{H^s}\sim 1$.  In the integral equation,
\[
u(t) = e^{-t\partial_x^3}\phi \mp \int_0^t
e^{-(t-t')\partial_x^3}\partial_x u(t')^3 \, dt' \,,
\]
we need to have
\begin{equation}
  \label{E:small}
  \left\| \int_0^t e^{-(t-t')\partial_x^3}\partial_x u(t')^3 \, dt' \, 
\right\|_{L_{[0,T]}^\infty H_x^s} \ll 1 \,.
\end{equation}
We estimate this term as
\[ 
\left\| \int_0^t e^{-(t-t')\partial_x^3}\partial_x u(t')^3 \, dt'
\right\|_{L_{[0,T]}^\infty H_x^s} \leq N^{1+s}
\|u^3\|_{L_{[0,T]}^1L_x^2} \leq T^\frac12 N^{1+s}
\|u\|_{L_{[0,T]}^6L_x^6}^3 \, .
\]
Making the heuristic substitution $u(t) \approx
e^{-t\partial_x^3}\phi$ and applying the Strichartz estimate
\eqref{E:Strtemp},
\[
\| u \|_{L_{[0,T]}^6L_x^6} \approx
\|e^{-t\partial_x^3}\phi\|_{L_{[0,T]}^6L_x^6} \lesssim
N^{-\frac16}\|\phi\|_{L^2} \approx N^{-\frac16-s} \, ,
\]
we see that to achieve \eqref{E:small}, we need $T\lesssim N^{4s-1}$.
Motivated by this, our main function spaces $X^s_M$  defined in the 
next section are constructed by using linear type norms at 
frequency $N$ on the timescale $N^{4s-1}$.

\subsection{Acknowledgments} 
M.C. was supported in part by NSF grant DMS-0901569, J.H. was
supported in part by NSF grant DMS-0901582 and a fellowship from the
Sloan foundation and D.T. was supported in part by NSF grant
DMS-0801261 and by the Miller Foundation.

\section{Function spaces}
\label{S:norms}

We first recall from Koch-Tataru \cite{KT} (see also the careful exposition
in Hadac-Herr-Koch \cite[\S 2]{HHK}) the space-time function spaces
$U^p(I)$ (atomic-space) and $V^p(I)$ (space of functions of bounded
$p$-variation), $1\leq p\leq \infty$.  These are defined on a time
interval $I=[a,b)$, where $-\infty\leq a<b\leq +\infty$ and take values
in $L^2(\mathbb R)$ or any other Hilbert space. Given a
partition $a=t_0<t_1<\cdots<t_K=b$ of $I$ and a sequence $\{ \phi_k
\}_{k=0}^{K-1} \subset L^2_x$ such that $\phi_0=0$ and $\sum_{k=1}^K
\|\phi_{k-1}\|_{L_x^2}^p =1$, the function
\[a(t) = \sum_{k=1}^K \phi_{k-1} \chi_{[t_{k-1},t_k)}(t)\]
is called a $U^p(I)$ atom.  The space $U^p(I)$ is then the collection
of functions $u(t)$ on $I$ of the form
\begin{equation}
  \label{E:atom}
  u(t) = \sum_{\ell=0}^{+\infty} \lambda_\ell a_\ell \,,
\end{equation}
where $a_\ell$ are $U^p(I)$ atoms, with norm
\[
\|u(t)\|_{U^p(I)} = \inf_{\text{representations }
  \eqref{E:atom}}\sum_{\ell=0}^{+\infty} |\lambda_\ell| \,.
\]
It follows that elements $u(t)$ of $U^p(I)$ are right-continuous and
satisfy the boundary conditions
\begin{equation}
  \label{E:u-bdry}
  u(a) = \lim_{t \searrow a} u(t) = 0 \quad \text{and} \quad 
u(b)\defeq \lim_{t\nearrow b} u(t) \text{ exists}\,.
\end{equation}
To define the space $V^p(I)$, we consider functions $v: I\to L_x^2$
such that
\begin{equation}
  \label{E:v-bdry}
  v(a) = \lim_{t\searrow a}v(t) \text{ exists} \quad \text{and}
 \quad v(b)\defeq \lim_{t\nearrow b}v(t)=0 \,,
\end{equation}
and for such functions $v(t)$ define the norm
\[
\|v\|_{V^p(I)} = \sup_{ \{t_k\} } \left(\sum_{k=1}^K
  \|v(t_k)-v(t_{k-1})\|_{L_x^2}^p \right)^{1/p} \,,
\]
where the supremum is taken over partitions $a=t_0<\cdots <t_K=b$.
The fact that the requirement \eqref{E:v-bdry} is preserved in the
limit under the $V^p(I)$ norm follows from \cite[Prop 2.4(i)]{HHK}.

Note that for $I=[a,b)$, $-\infty<a<b<\infty$, we have
\[\| u\|_{U^p(I)} = \| \chi_{I} u \|_{U^p([-\infty,+\infty))}\]
provided $u(a)=0$.  If $u(a)\neq 0$, then the left-side is not defined
(i.e. $u\notin U_p(I)$), while the right-side is defined.  Also,
\[
\|v\|_{V^p(I)} + \|v(a)\|_{L_x^2} = \|\chi_I v
\|_{V^p([-\infty,+\infty))}
\]
provided $v(b)=0$.  If $v(b)\neq 0$, then the left-side is not defined
(i.e. $v\notin V_p(I)$), while the right-side is defined.  Note that a
consequence of \eqref{E:v-trunc} is that for any $v$ with $v(b)=0$, we
have
\begin{equation}
  \label{E:v-trunc}
  \|\chi_I v \|_{V^p([-\infty,+\infty))} \leq 2\|v\|_{V^p(I)} \,.
\end{equation}

\begin{lemma}[$U$-$V$ embeddings]
  \label{L:UVfacts}
  Fix an interval $I=[a,b)$.
  \begin{enumerate}
  \item \label{I:1} If $1\leq p \leq q < \infty$, then
    $\|u\|_{U^q}\leq \|u\|_{U^p}$ and $\|u\|_{V^q} \leq \|u\|_{V^p}$.
  \item \label{I:2} If $1\leq p< \infty$ and $u(b)=0$, then
    $\|u\|_{V^p} \lesssim \|u\|_{U^p}$.
  \item \label{I:3} If $1\leq p<q< \infty$, $u(a)=0$, and $u \in V^p$ is
    right-continuous, then $\|u\|_{U^q} \lesssim \|u\|_{V^p}$.
  \item \label{I:4} Suppose that $1\leq p<q< \infty$, and $T$ is a
    linear operator with the boundedness properties:
\[
\|Tu\|_E \leq C_q \|u\|_{U_A^q}\,, \qquad \|Tu\|_E \leq C_p
\|u\|_{U_A^p} \,, \qquad \text{with } 0< C_p \leq C_q \,,
\]
for some Banach space $E$.  Then
\[
\|Tu\|_E \lesssim \la \ln \frac{C_q}{C_p} \ra \|u\|_{V_A^p} \,,
\]
with implicit constant depending only on the proximity of $q$ and $p$.
\end{enumerate}
\end{lemma}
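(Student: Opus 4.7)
The plan is to prove the four parts roughly in sequence. Parts (1) and (2) follow directly from the atomic definitions; part (3) requires a dyadic stopping-time decomposition of $v$; and part (4) follows from the dyadic decomposition of (3) by balancing the two hypothesized bounds on $T$.

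For part (1), on the $V^p$ side the embedding $\ell^p \subset \ell^q$ for $p\le q$ applied to any partition's sequence of increments $\|v(t_k)-v(t_{k-1})\|_{L^2_x}$ gives the bound pointwise in the supremum over partitions. On the $U^p$ side, the normalization $\sum_k \|\phi_{k-1}\|^p = 1$ of a $U^p$-atom already forces $\sum_k \|\phi_{k-1}\|^q \le 1$, so every $U^p$-atom is a $U^q$-atom and infimizing over atomic decompositions gives $\|u\|_{U^q} \le \|u\|_{U^p}$. For part (2), I would use the atomic representation $u=\sum_\ell \lambda_\ell a_\ell$ together with Minkowski's inequality in $\ell^p$ to reduce to individual atoms; for a single atom $a(t)=\sum_k \phi_{k-1}\chi_{[t_{k-1},t_k)}$, the nonzero increments on any refining partition are the internal jumps $\phi_k-\phi_{k-1}$ together with (thanks to the global boundary condition $u(b)=0$, interpreted at the level of the sum) a compensating boundary jump at $b$, and a triangle inequality bounds their $\ell^p$-sum by $2\bigl(\sum_k \|\phi_{k-1}\|^p\bigr)^{1/p}\le 2$.

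For part (3), the core is a dyadic stopping-time decomposition of $v$ according to its $V^p$-variation. Normalizing $\|v\|_{V^p}=1$, for each $n\ge 0$ I define a partition $\{t_k^{(n)}\}$ by setting $t_0^{(n)}=a$ and letting $t_k^{(n)}$ be the first time after $t_{k-1}^{(n)}$ at which $\|v(t)-v(t_{k-1}^{(n)})\|_{L^2_x}$ reaches $2^{-n}$; right-continuity of $v$ ensures these stopping times are attained. By the definition of $V^p$, at most $\lesssim 2^{np}$ such stopping times exist. Let $v_n$ denote the right-continuous piecewise-constant interpolant equal to $v(t_k^{(n)})$ on $[t_k^{(n)},t_{k+1}^{(n)})$; then $\|v-v_n\|_\infty\le 2^{-n}$, so $v=\sum_{n\ge 1}(v_n-v_{n-1})$ (with $v_0\equiv 0$) converges uniformly. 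Each difference $v_n-v_{n-1}$ is piecewise constant with $\lesssim 2^{np}$ steps of pointwise size $\lesssim 2^{-n}$, so after renormalization it equals $\lambda_n$ times a $U^q$-atom with $\lambda_n\lesssim 2^{np/q}\cdot 2^{-n}=2^{-n(1-p/q)}$. Since $q>p$ the geometric series is summable, yielding $\|v\|_{U^q}\lesssim 1$.

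For part (4), I reuse these dyadic pieces. The previous count gives $\|v_n-v_{n-1}\|_{U_A^q}\lesssim 2^{-n(1-p/q)}$, while the same argument with $q$ replaced by $p$ gives $\|v_n-v_{n-1}\|_{U_A^p}\lesssim 1$ uniformly in $n$. Applying the two hypothesized bounds on $T$ yields
\[
\|T(v_n-v_{n-1})\|_E \lesssim \min\bigl\{C_p,\; C_q\,2^{-n(1-p/q)}\bigr\},
\]
and summing over $n$, with crossover index $N\sim (1-p/q)^{-1}\log_2(C_q/C_p)$ where the two bounds balance, produces $\|Tv\|_E\lesssim N\,C_p\sim \la\ln(C_q/C_p)\ra\, C_p\,\|v\|_{V_A^p}$. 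The main technical obstacle will be verifying that each piece $v_n-v_{n-1}$ admits an atomic decomposition whose $U^p$-norm is $O(1)$ uniformly in $n$: the $\ell^p$-normalization of the atom must be saturated by the $V^p$-variation budget rather than scaling with the number of stopping times, which requires a careful grouping of consecutive jumps of comparable size at each dyadic level.
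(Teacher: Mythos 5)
The paper does not actually prove Lemma \ref{L:UVfacts}: it cites Koch--Tataru \cite{KT} for parts (1)--(3) and Hadac--Herr--Koch \cite{HHK} (Prop.\ 2.2, 2.4, Cor.\ 2.6, Prop.\ 2.17) for all four. Your argument is correct and is, in substance, a reconstruction of the proofs in those references: the $\ell^p\subset\ell^q$ monotonicity and atom-by-atom reduction for (1)--(2), and the greedy stopping-time (dyadic threshold) decomposition $v=\sum_n(v_n-v_{n-1})$ for (3)--(4) is exactly the mechanism behind \cite[Prop.\ 2.5, Cor.\ 2.6, Prop.\ 2.17]{HHK}. Three small remarks. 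First, in (1) a $U^p$-atom is normalized by $\sum_k\|\phi_{k-1}\|^p=1$, so it is not literally a $U^q$-atom but a constant $\bigl(\sum_k\|\phi_{k-1}\|^q\bigr)^{1/q}\leq 1$ times one; the conclusion is unaffected. Second, the ``main technical obstacle'' you flag at the end is not one: your own count $K_n\lesssim 2^{np}$ stopping times with jumps of size $\lesssim 2^{-n}$ already gives $\sum_k\|\phi_k\|^p\lesssim 2^{np}\cdot 2^{-np}=1$, i.e.\ the uniform $U^p$ bound, with no further grouping needed. Third, in (4) one cannot write $Tv=\sum_{n\geq 1}T(v_n-v_{n-1})$ directly, since the pieces are only bounded (not summable) in $U^p_A$; the clean way to implement your crossover is to split $v=\sum_{n\leq N}(v_n-v_{n-1})+(v-v_N)$ at the finite crossover index $N$, estimate the first $N$ terms with the $C_p\,U^p_A$ bound and the single remainder with the $C_q\,U^q_A$ bound using $\|v-v_N\|_{U^q_A}\lesssim 2^{-N(1-p/q)}$, which yields $\|Tv\|_E\lesssim N C_p + C_q 2^{-N(1-p/q)}\lesssim \la \ln (C_q/C_p)\ra C_p$ as you intended. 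You should also record where the hypotheses enter: $u(a)=0$ makes the first atom value vanish, and right-continuity guarantees the stopping times are attained, both of which you use implicitly.
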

The first three statements are from Koch-Tataru~\cite{KT}, while the 
last originates in Hadac--Herr--Koch \cite{HHK}.  The
precise references  in \cite{HHK} for all four parts  are: for \eqref{I:1}, see
Prop. 2.2(ii) and Prop. 2.4(iv); for \eqref{I:2}, see Prop. 2.4(iii);
for \eqref{I:3}, see Cor. 2.6; for \eqref{I:4} Prop. 2.17. We
emphasize that in \eqref{I:3}, \eqref{I:4}, we have \emph{strict}
inequality $p<q$.  We also remark that \eqref{I:4} should be thought
of as a quantitative version of \eqref{I:3}.

We now define the space
\[DU^2(I) = \{ \, \partial_t u \, | \, u\in U^2(I) \,\} \,,\]
where the derivative is taken in the sense of distributions.  Given
$f\in DU^2(I)$, a $u\in U^2(I)$ such that $\partial_t u =f$ is in fact
\emph{unique} (recall $u(a)=0$).  Hence we can define
\[\|f\|_{DU^2(I)} = \| u\|_{U^2(I)} \,,\]
which makes $DU^2(I)$ a Banach space.  For example, if $u$ is an atom,
i.e. $u= \sum_{k=1}^K \phi_{k-1} \chi_{[t_{k-1},t_k)}$ with
$a=t_0<\cdots<t_K=b$, $\phi_0=0$ and $\sum_{k=1}^K
\|\phi_{k-1}\|_{L_x^2}^2=1$, then
\[f = \partial_t u= \sum_{k=1}^{K} (\phi_k-\phi_{k-1})\delta_{t_k} \,,\]
(where $\delta_{t_k}$ is the Dirac mass at $t_k$ and we take
$\phi_K\defeq 0$) is an element of $DU^2(I)$ with $\|f\|_{DU^2(I)}=1$.
Note that in this $f$, there is no Dirac mass at position $a$ but
there is one at position $b$ (namely $-\phi_{K-1}\delta_b$). 

\begin{lemma}[$DU$-$V$ duality]
  \label{L:duality}
  We have $(DU^2(I))^*=V^2(I)$ with respect to the usual pairing $\la
  f,v \ra = \int_a^b \la f(t),v(t)\ra_x \, dt= \int_a^b \int_x f \bar
  v \,dx\, dt$.
\end{lemma}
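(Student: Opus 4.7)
The plan is to establish both inclusions of the duality isomorphism. For the easier inclusion $V^2(I)\hookrightarrow (DU^2(I))^*$, I take $v\in V^2(I)$ and a $U^2$-atom $u=\sum_{k=1}^{K}\phi_{k-1}\chi_{[t_{k-1},t_k)}$, so that
\[
f=\partial_t u=\sum_{k=1}^{K}(\phi_k-\phi_{k-1})\delta_{t_k}, \qquad \phi_0=\phi_K=0,\qquad \sum_{k=1}^{K}\|\phi_{k-1}\|_{L^2_x}^{2}=1.
\]
Interpreting the pairing on such a distributional $f$ as the finite sum $\langle f,v\rangle=\sum_{k=1}^{K}\langle \phi_k-\phi_{k-1},v(t_k)\rangle$, I apply Abel summation and the boundary condition $v(b)=v(t_K)=0$ from \eqref{E:v-bdry} to rewrite this as $\sum_{k=1}^{K-1}\langle \phi_k,\,v(t_k)-v(t_{k+1})\rangle$. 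A Cauchy--Schwarz in $k$ combined with the atom normalization and the definition of the $V^2$ norm then gives $|\langle f,v\rangle|\leq \|v\|_{V^2}$. This bound extends to arbitrary $u\in U^2$ by the atomic representation \eqref{E:atom} and the triangle inequality, so $\|v\|_{(DU^2)^*}\leq \|v\|_{V^2}$.

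For the converse, given $L\in (DU^2(I))^*$, I construct $v\in V^2(I)$ pointwise by specifying its action on test vectors: for each $t\in(a,b)$ and $\phi\in L^2_x$, the function $\phi\chi_{[t,b)}$ is a $U^2$-atom of norm $\|\phi\|_{L^2_x}$ (with the required $\phi_0=0$ convention supplied by the leading interval $[a,t)$), so I define $\langle v(t),\phi\rangle\defeq L\bigl(\partial_t(\phi\chi_{[t,b)})\bigr)$. This yields $v(t)\in L^2_x$ with $\|v(t)\|_{L^2_x}\leq \|L\|$, and $v(b)=0$ follows because the atoms $\phi\chi_{[t,b)}$ shrink to zero in $U^2$ as $t\nearrow b$. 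To estimate $\|v\|_{V^2}$, I fix a partition $a=t_0<t_1<\cdots<t_K=b$ and pick any vectors $\psi_k\in L^2_x$; forming $u=\sum_{k=0}^{K-1}\psi_k\chi_{[t_k,t_{k+1})}$ with $\psi_0=0$ is a $U^2$-element with $\|u\|_{U^2}\leq (\sum_k\|\psi_k\|^2)^{1/2}$, and the Abel-summation identity of the previous paragraph gives $L(\partial_t u)=\sum_{k=1}^{K-1}\langle \psi_k,\,v(t_k)-v(t_{k+1})\rangle$. Choosing $\psi_k=v(t_k)-v(t_{k+1})$ and dividing yields $\big(\sum_k\|v(t_k)-v(t_{k+1})\|^2\big)^{1/2}\leq \|L\|$, hence $\|v\|_{V^2}\leq \|L\|$.

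Finally, I would verify that the pairing produced by the constructed $v$ actually recovers $L$. By linearity it suffices to check this on step-function atoms, where the very definition of $v(t)$ makes this immediate for single-jump atoms, and Abel summation extends it to arbitrary atoms; atoms are dense in $U^2$ and hence their derivatives are dense in $DU^2$, so the identity $L(f)=\langle f,v\rangle$ follows for all $f\in DU^2$. The main subtlety is making rigorous sense of the pairing $\langle f,v\rangle=\int_a^b\langle f(t),v(t)\rangle_x\,dt$ when $f$ is a sum of Dirac masses and $v$ is merely of bounded $2$-variation (hence possibly discontinuous at the atoms of $f$); the standard resolution, following Hadac--Herr--Koch, is to take the Stieltjes-type sum $\sum_k\langle \phi_k-\phi_{k-1},v(t_k)\rangle$ as the definition on step atoms (a definition that is forced by the boundary conventions $\phi_0=0$ and $v(b)=0$), and then extend by continuity using Lemma~\ref{L:UVfacts}. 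Reconciling the right-continuity convention for $U^2$ with the freedom one has in choosing pointwise values of a $V^2$ function at jump points is the principal technical hurdle, but causes no ambiguity once one commits to evaluating $v$ at the partition nodes $t_k$ coming from the atomic representation of $u$.
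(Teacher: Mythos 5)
Your overall route is the same as the paper's: bound the pairing on atoms by an Abel-summation/Stieltjes computation plus Cauchy--Schwarz, and recover the representing $v$ from a given functional by testing against single-jump atoms $\phi\chi_{[t,b)}$. The first half is fine. The genuine gap is in the converse direction, at the step where you assert $v(b)=0$ ``because the atoms $\phi\chi_{[t,b)}$ shrink to zero in $U^2$ as $t\nearrow b$.'' They do not: the $U^2$ atomic norm is completely insensitive to the lengths of the subintervals in the partition, so $\|\phi\chi_{[t,b)}\|_{U^2}=\|\phi\|_{L^2_x}$ for every $t\in(a,b)$. Consequently there is no reason for $L\bigl(\partial_t(\phi\chi_{[t,b)})\bigr)$ to tend to $0$ as $t \nearrow b$; for instance the bounded functional $\partial_t u\mapsto \la u(b^-),\phi_0\ra_x$ produces, under your construction, $v(t)\equiv\phi_0$ on $(a,b)$, whose left limit at $b$ is $\phi_0\neq 0$, so the resulting $v$ violates the boundary requirement \eqref{E:v-bdry} and does not lie in $V^2(I)$ as defined here.

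The repair is the normalization the paper carries out: one first shows that the left limit $w(b)\defeq\lim_{t\nearrow b}w(t)$ \emph{exists} (this is \cite[Prop.\ 2.4(i)]{HHK}, a fact about bounded $2$-variation, not a smallness statement --- note your ``shrink to zero'' reasoning would be needed even to get existence of the limit, and it is false), and then replaces $w$ by $v=w-w(b)$. The subtraction of a constant costs nothing in the pairing, since $\sum_{k=1}^K(\phi_k-\phi_{k-1})=\phi_K-\phi_0=0$ for every atom, but it is exactly what places $v$ in $V^2(I)$. This is not cosmetic: your Abel-summation identity $L(\partial_t u)=\sum_{k=1}^{K-1}\la\psi_k,\,v(t_k)-v(t_{k+1})\ra$, and hence the bound $\|v\|_{V^2}\leq\|L\|$, silently discard a boundary term $\la\psi_{K-1},v(b)\ra$ that survives unless $v(b)=0$. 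With the normalization inserted and the limit existence cited, the rest of your argument coincides with the paper's.
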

\begin{proof}
  First, we show that if $u\in U^2$ is such that $\partial_t u = f$,
  $u(a)=0$, then $|\la f,v\ra| \leq \|\chi_I u\|_{U^2(I)}
  \|v\|_{V^2(I)}$ for all $v\in V^2(I)$.  Indeed, it suffices to show
  this for $u$ an atom, i.e $u= \sum_{k=1}^K \phi_{k-1}
  \chi_{[t_{k-1},t_k)}$, where $a=t_0<\cdots<t_K=b$ and $\phi_0=0$ and
  $\sum_{k=1}^K \|\phi_k\|_{L_x^2}^2 =1$.  Since $u(a)=0$ and
  $v(b)=0$, we have
  \begin{align*}
    \la f,v\ra
    &= \la \partial_t u, v \ra = - \la u, \partial_t v \ra = -\sum_{k=1}^K \int_a^b \chi_{[t_{k-1},t_k)} \la \phi_{k-1},\partial_t v \ra_x \\
    &= -\sum_{k=1}^K \la \phi_{k-1}, (v(t_k)-v(t_{k-1}))\ra
  \end{align*}
  By Cauchy-Schwarz,
\[| \la f, v \ra | \leq \left(\sum_{k=1}^K \|\phi_{k-1}\|_{L_x^2}^2 \right)^{1/2} \left( \sum_{k=1}^K \|v(t_k)-v(t_{k-1})\|_{L_x^2}^2 \right)^{1/2} \leq \|v\|_{V^2} \,.\]
Next we show that $\sup_{\|f\|_{DU^2(I)}\leq 1} |\la f, v \ra | =
\|v\|_{V^2(I)}$.  Pick a partition $a=t_0<\cdots < t_K=b$ and define
$\phi_0=0$, and for $2\leq k\leq K$ define
\[\phi_{k-1} = \frac{v(t_k)-v(t_{k-1})}{\left(\sum_{j=2}^K \|v(t_j)-v(t_{j-1})\|_{L_x^2}^2\right)^{1/2}}\]
Then, defining $u = \sum_{k=1}^K \phi_{k-1} \chi_{[t_{k-1},t_k)}$ and
$f=\partial_t u$ and arguing as above, $u$ is an atom and
\[\la f,v \ra = \left(\sum_{j=2}^K \|v(t_j)-v(t_{j-1})\|_{L_x^2}^2\right)^{1/2} \,.\]
Taking the supremum over all partitions and using that
$\lim_{t\searrow a} v(t) = v(a)$, we obtain the claim.

Finally, we must show that if $\tilde v\in (DU^2(I))^*$, then there
exists $v\in V^2(I)$ such that $\tilde v(f) = \la f,v\ra$ for all
$f\in DU^2(I)$.  Fix $a<t<b$, and we first define $w(t)$ as follows.
The functional $\phi \mapsto \tilde v( \phi \cdot \delta_t)$ (where
$\delta_t$ is the Dirac mass at $t$) is a bounded linear mapping
$L_x^2 \to \mathbb{C}$.  Hence there exists $w(t)\in L_x^2$ such that
$\la \phi, w(t) \ra_x = \tilde v( \phi \cdot \delta_t)$.  It follows
from \cite[Prop. 2.4(i)]{HHK} that $w(a) \defeq \lim_{t\searrow a}
w(t)$ exists and $w(b)\defeq \lim_{t\nearrow b} w(t)$ exists.  Set
$v(t) = w(t)-w(b)$.  Then if $u$ is an atom in $U^2(I)$ (taking
$\phi_K\defeq 0$ for notational convenience in the summations) and
$f=\partial_t u$,
\begin{align*}
  \la f,v \ra &= \left\la \sum_{k=1}^{K} (\phi_k-\phi_{k-1})\delta_{t_k}, v \right\ra = \sum_{k=1}^K \la (\phi_k-\phi_{k-1}),v(t_k)\ra_x
  = \sum_{k=1}^K  \la (\phi_k-\phi_{k-1}),w(t_k)\ra_x \\
  &= \sum_{k=1}^K  \tilde v( (\phi_k-\phi_{k-1}) \delta_{t_k})
  = \tilde v\left(  \sum_{k=1}^K(\phi_k-\phi_{k-1})\delta_{t_k} \right)
  = \tilde v(f)
\end{align*}

\end{proof}

Now we use the $U^p$ and $V^p$ spaces defined above to construct similar spaces
adapted to the Airy flow. As  base Hilbert spaces
in which functions in $U^p$ and
$V^p$ take values,
 we will use $L^2$, $H^s$, 
as well as a different norm $H^s_M$ on $H^s$ defined by
\[
\|\phi\|_{H^s_M} = \|(|\xi|^2+M)^{\frac{s}2} \hat \phi\|_{L^2}, \qquad M \geq 1 
\]
Finally, for a positive smooth even symbol $a$ satisfying $|a_\xi(\xi)|
\lesssim a(\xi)$ we define the space $H^a$ with norm 
\[
\|\phi\|_{H^a}^2 = \la \phi, a(D) \phi \ra
\]

If the $L^2$ space in the definition of $U^p(I)$, $V^p(I)$ and $DU^2$ spaces
is replaced by another Hilbert space $H \in \{ L^2, H^s, H^s_M,H^a\}$,
we denote the corresponding spaces by $U^2(I;H)$, $V^2(I;H)$, respectively
$DU^2(I;H)$.  Finally, pulling back by the Airy group
$e^{-t\partial_x^3}$ gives the spaces
\[
\|u\|_{U_A^{p}(I;H)} \defeq \| e^{t\partial_x^3} u \|_{U^{p}(I;H)}, \quad 
\|u\|_{V_A^{p}(I;H)} \defeq \| e^{t\partial_x^3} u \|_{V^{p}(I;H)},
\]
\[
\|u\|_{DU_A^{2}(I;H)} \defeq \| e^{t\partial_x^3} u \|_{DU^{2}(I;H)}
\]
The properties in Lemmas~\ref{L:UVfacts},\ref{L:duality}
are easily transferred to this setting.

Consider a dyadic partition of frequencies ($N=2^k$ for some
$k=0,1,\ldots$), $E_N = \{ \xi \, : \, N/2 \leq |\xi| \leq 2N \, \}$,
and let $E_0=[-1,1]$.   Fix consideration to the time interval $[0,1)$.
Consider a smooth Littlewood-Paley partition of unity in frequency 
$1 = \sum P_N$ where each multiplier
$P_N$ is localized to the corresponding set  $E_N$.  For $H$ as above let
\[
\|u\|_{\ell^2L_{[0,1)}^\infty H} \defeq \Big[ \sum_N 
\Big( \|P_N u(t)\|_{L^\infty_{[0,1)} H} \Big)^2 \Big]^{1/2} \, .
\]
Clearly $\|u\|_{L^\infty_{[0,1)} H} \leq
\|u\|_{\ell^2L^\infty_{[0,1)} H}$, but the converse is not true.

To measure the solutions to the mKdV equation we define the spaces 
$X^s_M$ with the norm
\[\|u\|_{X^s_M} \defeq \Big(
\sup_{ |I|=M^{4s-1}}  \|\chi_I P_{\leq M} u\|_{U_A^{2}H^s_M}^2+
 \sum_{N>M}  \sup_{ |I|=N^{4s-1}}  \|\chi_I P_N u\|_{U_A^{2}H^s_M}^2 \Big)^{1/2} \, ,
\]  
where\footnote{Note that here we have written $\|\chi_I P_N
  u\|_{U_A^{s,2}}$ and not $\|P_N u\|_{U_A^{s,2}(I)}$.  Naturally, we
  are not assuming $u$ vanishes at the left endpoint of each of these
  intervals.}  the supremum is taken over all half-open subintervals
$I=[a,b)\subset [0,1)$ of length $N^{1-4s}$.

To measure the nonlinearity in the mKdV equation we define the spaces 
$Y^s_M$ with the norm
\[
\|f\|_{Y^s_M} \defeq \Big( \sup_{ |I|=M^{4s-1}}  \|P_{\leq M} f\|_{DU_A^2 H^s_M}^2+
 \sum_{N > M}  \sup_{ |I|=N^{4s-1}}  \|P_N f\|_{DU_A^{2}(I;H^s_M)}^2 \Big)^{1/2} \, ,
\]  
Similarly we define the space $X^a_M$ and $Y^a_M$.

\section{Basic estimates}
\label{S:basic}

\begin{lemma}
\label{L:equationestimate}
Suppose $\partial_t u + \partial_x^3 u = f$ on $[0,1)$.  Then
\[\| u\|_{X^s_M} \lesssim \|u\|_{\ell^2 L_{[0,1)}^\infty H_M^s} + \|f\|_{Y^{s}_M}\]
\end{lemma}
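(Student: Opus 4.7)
The plan is to work one dyadic piece at a time and, on each short time interval $I$, split $P_N u$ into a homogeneous and an inhomogeneous Airy contribution. Applying $P_N$ to the equation yields $\partial_t P_N u + \partial_x^3 P_N u = P_N f$ on $[0,1)$. Fix a dyadic scale $N$ and an admissible interval $I = [a,b) \subset [0,1)$ of length $N^{4s-1}$ (respectively $M^{4s-1}$ in the low-frequency case). Write $P_N u = v + w$, where
\[
v(t) = e^{-(t-a)\partial_x^3} P_N u(a), \qquad w(t) = \int_a^t e^{-(t-t')\partial_x^3} P_N f(t')\, dt',
\]
so that $v(a) = P_N u(a)$, $w(a) = 0$, and both satisfy the Airy equation with forcing $0$ and $P_N f$ respectively on $I$.

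For the homogeneous piece, $e^{t\partial_x^3} v(t) \equiv e^{a\partial_x^3} P_N u(a)$ is constant in $t$, so $\chi_I v$ corresponds under the Airy pullback to a single three-step function (zero, then the constant $e^{a\partial_x^3} P_N u(a)$ on $[a,b)$, then zero). This is a scalar multiple of a $U^2(H^s_M)$ atom, and since $H^s_M$ is invariant under the Airy group,
\[
\|\chi_I v\|_{U_A^2 H^s_M} \leq \|P_N u(a)\|_{H^s_M} \leq \|P_N u\|_{L^\infty_{[0,1)} H^s_M}.
\]
For the inhomogeneous piece, $w \in U^2_A(I; H^s_M)$ with $w(a) = 0$ and $\partial_t w + \partial_x^3 w = P_N f$ on $I$, so by the very definition of $DU^2_A$,
\[
\|w\|_{U_A^2(I; H^s_M)} = \|P_N f\|_{DU_A^2(I; H^s_M)}.
\]
Since $w(a) = 0$, the identity recalled in \S \ref{S:norms} gives $\|\chi_I w\|_{U_A^2 H^s_M} = \|w\|_{U_A^2(I; H^s_M)}$.

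Combining these two bounds via the triangle inequality,
\[
\|\chi_I P_N u\|_{U_A^2 H^s_M} \leq \|P_N u\|_{L^\infty_{[0,1)} H^s_M} + \|P_N f\|_{DU_A^2(I; H^s_M)}.
\]
Taking the supremum over $I$ of length $N^{4s-1}$, squaring, and summing over dyadic $N > M$ (together with the analogous bound for the low-frequency piece $P_{\leq M} u$ on intervals of length $M^{4s-1}$) yields
\[
\|u\|_{X^s_M}^2 \lesssim \|u\|_{\ell^2 L^\infty_{[0,1)} H^s_M}^2 + \|f\|_{Y^s_M}^2,
\]
as desired.

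I expect no serious obstacle: the whole argument is essentially a restatement of the definitions, with the one mild subtlety being to recognize $\chi_I v$ as a genuine $U^2$ atom despite $v(a)\neq 0$ (the point is that the atom structure places the nonzero value on $[a,b)$ and zero elsewhere, which costs only $\|P_N u(a)\|_{H^s_M}$). Everything else follows from the duality/definition of $DU^2_A$ and the restriction identity for $U^p$ noted right after the definition of the spaces.
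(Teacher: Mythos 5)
Your proof is correct and is essentially the paper's own argument: your Duhamel splitting $P_N u = v + w$ is exactly the paper's decomposition $e^{t\partial_x^3}u(t) = u(t_0) + \bigl(e^{t\partial_x^3}u(t) - u(t_0)\bigr)$ after conjugation by the Airy group, with the constant (homogeneous) piece bounded as a single $U^2$ atom of norm $\|P_N u(a)\|_{H^s_M}$ and the inhomogeneous piece handled directly by the definition of $DU_A^2(I;H^s_M)$. The triangle inequality, supremum over $I$, and $\ell^2$ summation over dyadic $N$ then match the paper step for step.
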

\begin{proof}
Reduce to the case of a single frequency $N$ by applying $P_N$ to the equation, and then consider a fixed time interval $I=[t_0,t_1)$.  We need to show
\[
\|\chi_I u\|_{U_A^{2} H} \leq \|u(t_0)\|_{H} + \| f\|_{DU_A^{2}(I;H)} \,.
\]
But $\partial_t [ e^{t\partial_x^3}u(t)] = e^{t\partial_x^3}f(t)$, and thus
\[
\|f\|_{DU_A^2(I;H)} = \|e^{t\partial_x^3}f(t)\|_{DU^2(I;H)} = 
\|\chi_I (e^{t\partial_x^3}u(t) -u(a)) \|_{U^2 H} \,.
\]
Hence
\begin{align*}
\| \chi_I u \|_{U_A^2 H} &= \|\chi_I e^{t\partial_x^3}u(t) \|_{U^2 H} \\
&\leq \| \chi_I (e^{t\partial_x^3}u(t) - u(t_0)) \|_{U^2 H} + \|\chi_I u(t_0)\|_{U^2 H}\\
&= \|u(t_0)\|_{H}+ \|f\|_{DU_A^2(I;H)} \,.
\end{align*}
\end{proof}

\begin{lemma}[Bernstein inequality]
\label{L:Bernstein}
For $1\leq p\leq q \leq \infty$,
\[\|P_N f\|_{L^q} \lesssim N^{\frac{1}{q}-\frac{1}{p}} \|f\|_{L^p}\]
\end{lemma}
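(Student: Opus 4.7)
The plan is to realize $P_N f$ as a convolution with the Littlewood--Paley kernel and to invoke Young's inequality together with the scaling of that kernel. Write $P_N f = \phi_N \ast f$, where $\widehat{\phi_N}$ is a smooth bump supported in the dyadic annulus $E_N$ of size $\sim N$. For $N \geq 1$ the kernel scales as $\phi_N(x) = N \phi_1(N x)$ for a fixed Schwartz function $\phi_1$, so a direct change of variables gives
\[
\|\phi_N\|_{L^r(\mathbb{R})} \sim N^{1 - 1/r}
\]
for every $r \in [1,\infty]$. The low-frequency block $N = 0$, for which $E_0 = [-1,1]$, is handled separately with a single fixed Schwartz kernel and is harmless, contributing only a uniform constant.

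Next I would apply Young's convolution inequality with the H\"older relation $1 + \tfrac{1}{q} = \tfrac{1}{r} + \tfrac{1}{p}$, which under the hypothesis $p \leq q$ yields a valid choice of $r \in [1,\infty]$. Combining
\[
\|P_N f\|_{L^q} \leq \|\phi_N\|_{L^r} \|f\|_{L^p} \lesssim N^{1 - 1/r} \|f\|_{L^p}
\]
with the algebraic identity forced by the Young relation then produces the power of $N$ recorded in the statement. Conceptually this is the usual trade-off: a Littlewood--Paley kernel, because its Fourier transform lives in an annulus of radius $N$, is spatially concentrated at scale $N^{-1}$, and every $L^r$ norm of it is pinned down by scaling alone.

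There is essentially no obstacle here: the lemma is the classical one-dimensional Bernstein inequality, and the proof collapses to a single line once the kernel scaling and Young's inequality are in place. The only bookkeeping is matching the paper's sign convention in the final exponent of $N$ and uniformly handling the low-frequency block $N = 0$ via the same scaling argument applied at scale $1$. Nothing about the dispersive or Fourier--restriction machinery of the paper is invoked; this is a baseline input to be used later in the trilinear and energy estimates.
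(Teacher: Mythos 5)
Your proof is correct and is the standard one (Littlewood--Paley kernel scaling plus Young's inequality); the paper states this lemma without proof as a classical fact, so there is nothing to compare against. One point you should make explicit rather than defer to ``matching the paper's sign convention'': your computation yields $\|P_Nf\|_{L^q}\lesssim N^{\frac1p-\frac1q}\|f\|_{L^p}$, which is the correct classical exponent (nonnegative for $p\le q$), and the exponent $N^{\frac1q-\frac1p}$ printed in the lemma is a sign typo, as confirmed by the paper's later use of $\|u_{N}\|_{L^\infty}\lesssim N^{1/2}\|u_{N}\|_{L^2}$ in the proof of Corollary~\ref{C:E-bds}.
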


\subsection{Strichartz, local smoothing, and maximal function
  estimates}

A pair $(p,q)$ of H\"older exponents will be called admissible if
\begin{equation}
  \label{E:admissible}
  \frac2p + \frac1q = \frac12, \qquad 4\leq p\leq \infty, \quad 2\leq q \leq \infty \,.
\end{equation}
In particular, we note that the following pairs $(p,q)$ of indices are
admissible: $(\infty, 2)$, $(6,6)$, $(4,\infty)$.

\begin{lemma}[Strichartz estimates]
  \label{L:Strichartz}
  Let $(p,q)$ satisfy the admissibility condition
  \eqref{E:admissible}.  Then
  \begin{equation}
    \label{E:Str}
    \| D_x^\frac1p e^{-t\partial_x^3}\phi \|_{L_t^pL_x^q} \lesssim \|\phi\|_{L^2}\, .
  \end{equation}
\end{lemma}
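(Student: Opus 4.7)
The plan is to first reduce to frequency-localized pieces, prove a dispersive decay bound via stationary phase, combine these via a $TT^*$ argument and Hardy--Littlewood--Sobolev, and finally reassemble via Littlewood--Paley. Concretely, after a dyadic Littlewood--Paley decomposition $\phi = \sum_N P_N \phi$, it suffices to prove the frequency-localized bound
\[
\|e^{-t\partial_x^3} P_N \phi\|_{L^p_t L^q_x} \lesssim N^{1/p} \|P_N \phi\|_{L^2}
\]
for every dyadic $N \geq 1$, since the claim will then follow from $L^2$ orthogonality together with the Littlewood--Paley square-function equivalence for $L^p_t L^q_x$, valid in the interior $4 < p < \infty$, $2 < q < \infty$.

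First I would establish the dispersive decay
\[
\|P_N e^{-t\partial_x^3} \phi\|_{L^\infty_x} \lesssim N^{1/2}\, |t|^{-1/2}\, \|P_N \phi\|_{L^1_x}.
\]
Writing $P_N e^{-t\partial_x^3}$ as convolution with the kernel $K_{N,t}(x) = \int e^{i(x\xi + t\xi^3)} \chi_N(\xi)\, d\xi$, the phase $\Phi(\xi) = x\xi + t\xi^3$ has $|\Phi''(\xi)| = 6|t\xi| \sim |t|N$ on $\supp \chi_N$, so Van der Corput yields $\|K_{N,t}\|_{L^\infty} \lesssim (|t|N)^{-1/2} \cdot N = N^{1/2}|t|^{-1/2}$. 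Riesz--Thorin interpolation with the $L^2$ unitarity $\|e^{-t\partial_x^3} \phi\|_{L^2} = \|\phi\|_{L^2}$ then yields
\[
\|P_N e^{-t\partial_x^3} \phi\|_{L^q_x} \lesssim (N/|t|)^\alpha \|P_N \phi\|_{L^{q'}_x}, \qquad \alpha \defeq \tfrac12 - \tfrac1q.
\]

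Next comes the $TT^*$ step. Setting $T\phi = e^{-t\partial_x^3} P_N \phi$, the operator $TT^* F(t) = \int e^{-(t-s)\partial_x^3} P_N^2 F(s)\, ds$ satisfies
\[
\|TT^* F(t)\|_{L^q_x} \lesssim N^\alpha \int |t-s|^{-\alpha} \|P_N F(s)\|_{L^{q'}_x}\, ds,
\]
and Hardy--Littlewood--Sobolev in time converts this into $\|TT^* F\|_{L^p_t L^q_x} \lesssim N^\alpha \|F\|_{L^{p'}_t L^{q'}_x}$ provided $\alpha = 2/p$, which is precisely the admissibility condition $2/p + 1/q = 1/2$, with $0 < \alpha < 1$ corresponding to the interior range $4 < p < \infty$, $2 < q < \infty$. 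The standard $TT^*$ duality then produces the frequency-localized estimate with constant $N^{\alpha/2} = N^{1/p}$, and summing in $N$ via the square function inequality closes the interior case.

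The main obstacles are the two endpoints. The energy endpoint $(p,q) = (\infty, 2)$ is immediate from the unitarity of $e^{-t\partial_x^3}$ on $L^2$ with no derivative needed, so $D_x^0$ suffices. The maximal-function endpoint $(p,q) = (4, \infty)$ is the substantive difficulty: HLS degenerates as $\alpha \uparrow 1/2$, and $L^\infty_x$ lies outside the range where the Littlewood--Paley square function controls the norm. For this one I would invoke the classical argument of Kenig--Ponce--Vega, which establishes $\|D_x^{1/4} e^{-t\partial_x^3}\phi\|_{L^4_t L^\infty_x} \lesssim \|\phi\|_{L^2}$ directly by a sharp stationary-phase analysis of the Airy-type kernel and dyadic summation using $L^4_t$ orthogonality in place of the unavailable $L^\infty_x$ square function.
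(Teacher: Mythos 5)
Your overall architecture is a legitimate, more self-contained route than the paper's (the paper simply quotes the Kenig--Ponce--Vega endpoint $\|D_x^{1/4}e^{-t\partial_x^3}\phi\|_{L^4_tL^\infty_x}\lesssim\|\phi\|_{L^2}$, pairs it with the trivial $L^\infty_tL^2_x$ bound, and gets the whole admissible line at once by Stein's analytic interpolation), but as written your argument proves the wrong estimate. The error is in the dispersive bound: Van der Corput with $|\Phi''(\xi)|=6|t\xi|\sim N|t|$ on $\supp\chi_N$ gives $\|K_{N,t}\|_{L^\infty}\lesssim (N|t|)^{-1/2}$ with \emph{no} additional factor of $N$ --- the amplitude contributes $\|\chi_N\|_{L^\infty}+\|\chi_N'\|_{L^1}=O(1)$, not $O(N)$. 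So the frequency-localized dispersive decay is $\|P_Ne^{-t\partial_x^3}\phi\|_{L^\infty_x}\lesssim (N|t|)^{-1/2}\|\phi\|_{L^1_x}$, and running your interpolation/$TT^*$/HLS scheme with this yields $\|e^{-t\partial_x^3}P_N\phi\|_{L^p_tL^q_x}\lesssim N^{-1/p}\|P_N\phi\|_{L^2}$, a \emph{gain} of $N^{-1/p}$. This is the correct frequency-localized statement (compare the paper's displayed consequence $\|P_Ne^{-t\partial_x^3}\phi\|_{L^6_tL^6_x}\lesssim N^{-1/6}\|\phi\|_{L^2}$), and it is what you need: the Littlewood--Paley reassembly requires $N^{1/p}\|e^{-t\partial_x^3}P_N\phi\|_{L^p_tL^q_x}\lesssim\|P_N\phi\|_{L^2}$ before square-summing. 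Your version, with the loss $N^{+1/p}$, would only reassemble to $\|D_x^{1/p}e^{-t\partial_x^3}\phi\|_{L^p_tL^q_x}\lesssim\|\phi\|_{\dot H^{2/p}}$, which is not the lemma. The fix is purely to delete the spurious factor of $N$ and flip the sign of the exponent throughout; the $TT^*$/HLS numerology ($\alpha=2/p$ matching admissibility) is otherwise correct.

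Two smaller remarks. First, your treatment of the endpoints is consistent with the paper: $(\infty,2)$ is unitarity, and $(4,\infty)$ genuinely requires the Kenig--Ponce--Vega maximal-function argument, which both you and the paper cite rather than reprove. But note that once you have that endpoint, the paper's interpolation between it and $(\infty,2)$ already covers the entire interior range, so the dispersive/$TT^*$ machinery, while instructive, is logically redundant. Second, your restriction of the square-function step to $2<q<\infty$ is the right caution, since vector-valued Littlewood--Paley fails at $q=\infty$.
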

In particular, we have, for $N\geq 1$,
\begin{align*}
  & \|P_N e^{-t\partial_x^3} \phi \|_{L_t^\infty L_x^2} \lesssim\|\phi\|_{L^2}, \\
  & \|P_N e^{-t\partial_x^3} \phi \|_{L_t^6 L_x^6} \lesssim N^{-\frac16}\|\phi\|_{L^2}, \\
  & \|P_N e^{-t\partial_x^3}\|_{L_t^4L_x^\infty} \lesssim
  N^{-\frac14}\|\phi\|_{L^2} \,.
\end{align*}
\begin{proof}
  In Kenig-Ponce-Vega \cite{KPV3} Lemma 2.4 / Kenig-Ponce-Vega
  \cite{KPV1} Lemma 3.18(i), the estimate
  \[\| D_x^{\frac14}e^{-t\partial_x^3}\phi\|_{L_t^4L_x^\infty}
  \lesssim \|\phi\|_{L_x^2}\] is proved.  On the other hand, we have
  trivially,
  \[\|e^{-t\partial_x^3}\phi\|_{L_t^\infty L_x^2} = \|\phi\|_{L_x^2}
  \,.\] Now we can apply Stein's theorem on analytic interpolation
  \cite{Stein} to obtain \eqref{E:Str}.
\end{proof}

\begin{lemma}[Local smoothing/maximal function estimates]
  \label{L:smooth_max}
  Let $(p,q)$ satisfy the admissibility condition
  \eqref{E:admissible}.  Then
  \begin{equation}
    \label{E:smooth_max}
    \| D_x^{1-\frac5p} e^{-t\partial_x^3} \phi \|_{L_x^pL_t^q} \lesssim \|\phi\|_{L^2} \, .
  \end{equation}
\end{lemma}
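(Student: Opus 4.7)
The plan is to establish the estimate at the two endpoints of the admissible range and then fill in the rest by analytic interpolation, in complete analogy with the proof of Lemma~\ref{L:Strichartz}.

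First, I would verify the two endpoint cases. At $p=\infty$ (so $q=2$ and $1-\frac{5}{p}=1$), the estimate
\[
\| \partial_x e^{-t\partial_x^3} \phi \|_{L_x^\infty L_t^2} \lesssim \|\phi\|_{L^2}
\]
is exactly Kato's local smoothing estimate for the Airy flow, established by Kenig--Ponce--Vega (see the same lemmas from \cite{KPV1,KPV3} cited in the proof of Lemma~\ref{L:Strichartz}). It can be verified directly on the Fourier side: for fixed $x$, the function $t \mapsto \partial_x e^{-t\partial_x^3}\phi(x)$ has Fourier transform proportional to $\xi e^{ix\xi}\hat\phi(\xi)$ under the change of variable $\tau = \xi^3$, whose Jacobian $|d\tau/d\xi|=3\xi^2$ exactly cancels the weight $|\xi|^2$, yielding Plancherel in $\tau$. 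At $p=4$ (so $q=\infty$ and $1-\frac{5}{p}=-\frac{1}{4}$), the estimate
\[
\| D_x^{-\frac14} e^{-t\partial_x^3}\phi \|_{L_x^4 L_t^\infty} \lesssim \|\phi\|_{L^2}
\]
is the Kenig--Ponce--Vega maximal function bound for the Airy group, likewise available from \cite{KPV1,KPV3}.

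Second, I would apply Stein's theorem on analytic interpolation to the analytic family of operators
\[
T_z \phi \defeq D_x^{1-5z} e^{-t\partial_x^3} \phi, \qquad z \in \{0 \leq \Re z \leq \tfrac14\},
\]
viewed as a map into mixed-norm spaces $L_x^{p(z)} L_t^{q(z)}$ where $p(z)$ and $q(z)$ are chosen to interpolate linearly between $(p,q) = (\infty,2)$ at $\Re z = 0$ and $(4,\infty)$ at $\Re z = \frac14$. The bounds on the two vertical boundary lines $\Re z = 0$ and $\Re z = \frac14$ follow from the endpoint estimates above, with the imaginary parts contributing only harmless bounded Fourier multipliers $D_x^{-5i\,\Im z}$ (whose operator norm on any $L^r$ grows at most polynomially in $\Im z$, as required by Stein's theorem). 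The interpolated conclusion at $\Re z = t/4$, $0 \leq t \leq 1$, gives precisely \eqref{E:smooth_max} with $p = 4/t$ ranging over $[4,\infty]$ and $q$ determined by the admissibility condition \eqref{E:admissible}.

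The verification of the two endpoints is standard and taken from the literature, so the only real content is setting up the analytic family so that Stein's theorem applies. The interpolation of mixed-norm $L^p_x L^q_t$ spaces is legitimate (by either the usual complex method or by viewing them as vector-valued Lebesgue spaces), so I do not anticipate a serious obstacle; the work is already done in the references for the Strichartz analogue.
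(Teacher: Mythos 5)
Your proposal is correct and follows exactly the paper's argument: the paper likewise cites the Kenig--Ponce--Vega local smoothing estimate $\|\partial_x e^{-t\partial_x^3}\phi\|_{L_x^\infty L_t^2}\lesssim\|\phi\|_{L^2}$ and the maximal function estimate $\|D_x^{-1/4}e^{-t\partial_x^3}\phi\|_{L_x^4L_t^\infty}\lesssim\|\phi\|_{L^2}$ as the two endpoints and then invokes Stein's analytic interpolation theorem. Your writeup merely spells out the analytic family $T_z$ and the mixed-norm interpolation in more detail than the paper does.
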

In particular, we note the following estimates, for $N\geq 1$:
\begin{align*}
  & \| P_N e^{-t\partial_x^3} \phi \|_{L_x^\infty L_t^2} \leq cN^{-1}\|\phi\|_{L^2}, \\
  & \| P_N e^{-t\partial_x^3} \phi \|_{L_x^6 L_t^6} \leq cN^{-\frac16}\|\phi\|_{L^2}, \\
  & \| P_N e^{-t\partial_x^3} \phi \|_{L_x^4 L_t^\infty} \leq
  cN^{\frac14}\|\phi\|_{L^2} \, .
\end{align*}
\begin{proof}
  The local smoothing estimate (Kenig-Ponce-Vega \cite{KPV1}, Theorem
  3.5(i)) is
  \[\|\partial_x e^{-t\partial_x^3}\phi\|_{L_x^\infty L_t^2} \lesssim
  \|\phi\|_{L^2} \,.\] It is basically reducible to Plancherel in $t$.
  On the other hand, we have the maximal function estimate
  (Kenig-Ponce-Vega \cite{KPV1}, Theorem 3.7(i) on p. 556)
  \[\| D_x^{-\frac14} e^{-t\partial_x^3} \phi\|_{L_x^4L_t^\infty}
  \lesssim \|\phi\|_{L^2} \,.\] It is proved by reducing by duality
  and a $TT^*$ argument to an estimate that is proved by the theorem
  on fractional integration and a pointwise Airy function estimate.
  We now apply Stein's theorem on analytic interpolation \cite{Stein}
  to obtain \eqref{E:smooth_max}.
\end{proof}

The next two corollaries are consequences of these estimates, and
relate the Strichartz space-time norms to the Airy-atomic norm
$U_A^{s,2}$ norm of \emph{any} function $u(x,t)$ (not necessarily a
solution to the linear Airy equation).

\begin{corollary}
  \label{C:X_Str}
  If $I=[a,b)$ is any interval, and $u=u(x,t)$ any function, then for
  $(p,q)$ satisfying the admissibility condition \eqref{E:admissible},
  we have, for $N\geq 1$,
  \begin{equation}
    \label{E:X_Str}
    \| P_N u \|_{L_I^pL_x^q} \lesssim N^{-\frac1p}\|\chi_I u\|_{U_A^{p} L^2} \, ,
  \end{equation}
  and we have the dual relation for $p>2$
  \begin{equation}
    \label{E:X_Str_dual}
    \|P_N u \|_{DU_A^{2}(I;L^2)} \lesssim N^{-\frac1p}\|u\|_{L_{I}^{p'}L_x^{q'}}\, ,
  \end{equation}
  where $(p',q')$ denotes the H\"older dual pair.
\end{corollary}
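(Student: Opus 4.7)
The plan is to prove the two inequalities in turn, with the second deduced from the first by duality.

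For \eqref{E:X_Str}, I would unwind the atomic definition of $U_A^p L^2$. Given $\chi_I u \in U_A^p L^2$ with $\|\chi_I u\|_{U_A^p L^2} < \infty$, write $e^{t\partial_x^3}(\chi_I u) = \sum_\ell \lambda_\ell a_\ell$ where each atom has the form $a_\ell(t) = \sum_k \phi_{k-1}^\ell \chi_{[t_{k-1}^\ell,t_k^\ell)}(t)$ on a partition of $I$ with $\sum_k \|\phi_{k-1}^\ell\|_{L^2}^p = 1$ (with $\sup_k$ when $p=\infty$). Pulling back by $e^{-t\partial_x^3}$ and applying $P_N$, on each subinterval $P_N e^{-t\partial_x^3}a_\ell$ agrees with $P_N e^{-t\partial_x^3}\phi_{k-1}^\ell$. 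The Strichartz estimate of Lemma \ref{L:Strichartz} yields $\|P_N e^{-t\partial_x^3}\phi\|_{L_t^p L_x^q} \lesssim N^{-1/p}\|\phi\|_{L^2}$. Raising to the $p$-th power and summing over the disjoint time intervals of the partition gives
\[
\|P_N e^{-t\partial_x^3}a_\ell\|_{L_t^p L_x^q}^p \lesssim N^{-1}\sum_k \|\phi_{k-1}^\ell\|_{L^2}^p = N^{-1}.
\]
The triangle inequality over $\ell$ and an infimum over representations finishes this part; the case $p=\infty$ is the same with $\sup_k$ replacing the sum. Since $\|P_N u\|_{L_I^p L_x^q} = \|P_N \chi_I u\|_{L_I^p L_x^q}$, this gives \eqref{E:X_Str}.

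For \eqref{E:X_Str_dual} I would dualize. The duality of Lemma \ref{L:duality}, combined with the fact that the Airy group is unitary on $L^2$ so $\la e^{t\partial_x^3}f, v\ra = \la f, e^{-t\partial_x^3}v\ra$ pointwise in $t$, gives
\[
\|P_N u\|_{DU_A^2(I;L^2)} = \sup_{\|w\|_{V_A^2(I;L^2)} \leq 1} |\la u, P_N w \ra_{I \times \mathbb{R}}|.
\]
H\"older then bounds the pairing by $\|u\|_{L_I^{p'}L_x^{q'}} \|P_N w\|_{L_I^p L_x^q}$. Since $p > 2$, Lemma \ref{L:UVfacts}\eqref{I:3} embeds $V_A^2 \hookrightarrow U_A^p$ (with the boundary-value adjustments in \eqref{E:v-trunc} handling the truncation to $I$), so \eqref{E:X_Str} already proved yields
\[
\|P_N w\|_{L_I^p L_x^q} \lesssim N^{-1/p}\|\chi_I w\|_{U_A^p L^2} \lesssim N^{-1/p}\|w\|_{V_A^2(I;L^2)},
\]
which is the desired bound.

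The main technical delicacy is the bookkeeping around boundary values when passing between $V^2$ and $U^p$ on the finite half-open interval $I$: one must verify that the truncation $\chi_I w$ of a $V_A^2(I;L^2)$ function remains in $U_A^p$ with controlled norm after normalizing the endpoint values (per the identities surrounding \eqref{E:v-trunc}), and that conjugation by $e^{-t\partial_x^3}$ commutes with all of this because the group is unitary on $L^2$. Everything else is a direct application of Strichartz and H\"older.
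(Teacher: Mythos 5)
Your proposal is correct and follows essentially the same route as the paper: reduce \eqref{E:X_Str} to a single $U_A^p$ atom, apply the Strichartz estimate of Lemma \ref{L:Strichartz} on each step of the partition and sum the $p$-th powers over the disjoint time intervals, then obtain \eqref{E:X_Str_dual} by the $DU^2$--$V^2$ duality of Lemma \ref{L:duality}, H\"older, the embedding $V_A^2 \hookrightarrow U_A^p$ for $p>2$ from Lemma \ref{L:UVfacts}\eqref{I:3}, and the truncation bound \eqref{E:v-trunc}. No substantive differences.
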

\begin{proof}
  To prove \eqref{E:X_Str}, it suffices to assume
  $I=[-\infty,+\infty)$, since $\chi_I$ can be inserted.  It also
  suffices to consider a $U_A^p$-atom
  \begin{equation}
    \label{E:u-atom}
    u(t,x) = \sum_{k=1}^K \chi_{[t_{k-1},t_k)}(t)e^{-t\partial_x^3}\phi_{k-1}(x), \quad \sum_{k=1}^K \|\phi_{k-1}\|_{L_x^2}^p = 1 \,, \quad \phi_0=0 \,,
  \end{equation}
  and prove that
  \begin{equation}
    \label{E:atomic-Str}
    \| P_N u \|_{L_t^p L_x^q} \lesssim N^{-\frac1p} \,.
  \end{equation}
  But \eqref{E:atomic-Str} follows directly from \eqref{E:Str}, as
  follows:
  \begin{align*}
    \| P_N u \|_{L_t^pL_x^q}^p &=  \sum_{k=1}^K \|\chi_{[t_{k-1},t_k)}(t) P_N e^{-t\partial_x^3} \phi_{k-1}\|_{L_t^p L_x^q}^p \\
    &\lesssim N^{-1} \sum_{k=1}^K \|\phi_{k-1}\|_{L_x^2}^p
    = N^{-1} \,.
  \end{align*}
  To prove \eqref{E:X_Str_dual}, note that since $(DU^2(I;L^2))^*=V^2(I;L^2)$,
  we have
  \[
\| P_N u\|_{DU_A^{2}(I;L^2)} = \sup_{\|v\|_{V_A^2(I;L^2)} \leq 1} \int_I
  \int_x P_N u \, \bar v \, dx\,dt \,.
\]
 But
  \[
| \la P_Nu,v \ra | \leq \|u\|_{L_{I'}^{p'} L_x^{q'}} \|P_N v
  \|_{L_I^pL_x^q} \,,
\]
and by \eqref{E:X_Str} and Lemma \ref{L:UVfacts}\eqref{I:3} (applied
on the interval $[-\infty,+\infty)$), we have, for $p>2$,
  \[
\|P_N v \|_{L_I^pL_x^q} \lesssim \|\chi_I v\|_{U_A^p L^2} \lesssim
  \|\chi_Iv\|_{V_A^2 L^2} \,.
\] 
Apply \eqref{E:v-trunc} ($\|\chi_I
  v\|_{V_A^2 L^2} \leq 2\|v\|_{V_A^2(I;L^2)}$) to complete the proof.
\end{proof}

\begin{corollary}
  \label{C:X_sm}
  If $(p,q)$ is admissible according to \eqref{E:admissible} and
  $p,q\geq r$, then
  \begin{equation}
    \label{E:X_sm}
    \|P_N u\|_{L_x^pL_I^q}  \lesssim N^{\frac5p-1} \|\chi_I u\|_{U_A^{r} L^2} \,.
  \end{equation}
  for any interval $I=[a,b)$.  We also have the dual relation for
  $q>2$,
  \begin{equation}
    \label{E:X_sm_dual}
    \|P_N u\|_{DU_A^{2}(I;L^2)} \lesssim N^{\frac5p-1} \|u\|_{L_x^{p'}L_{I}^{q'}} \,,
  \end{equation}
  where $(p',q')$ is the H\"older dual pair.
\end{corollary}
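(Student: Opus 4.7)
\smallskip

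The plan is to imitate the proof of Corollary~\ref{C:X_Str}, substituting the local smoothing/maximal function estimate \eqref{E:smooth_max} for the Strichartz estimate \eqref{E:Str}. The new wrinkle is that the $x$-integration is on the outside, so the disjoint-time-support trick does not interact as cleanly with the outer norm; this is exactly where the hypothesis $p, q \geq r$ enters.

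For \eqref{E:X_sm}, I would first insert $\chi_I$ so that it suffices to treat $I = [-\infty, +\infty)$, and then reduce to a single $U_A^r$-atom of the form $u(x,t) = \sum_{k=1}^K \chi_{[t_{k-1},t_k)}(t)\, e^{-t\partial_x^3}\phi_{k-1}(x)$ with $\sum_k \|\phi_{k-1}\|_{L^2}^r = 1$. For fixed $x$, the pieces have disjoint time supports, so
\[
\|P_N u(x,\cdot)\|_{L_t^q}^q = \sum_{k=1}^K \|\chi_{[t_{k-1},t_k)}(t)\, P_N e^{-t\partial_x^3}\phi_{k-1}(x)\|_{L_t^q}^q.
\]
Using $q \geq r$ (so that $\ell^q \hookrightarrow \ell^r$ pointwise in $x$) and then Minkowski's inequality using $p \geq r$ gives
\[
\|P_N u\|_{L_x^p L_t^q} \le \left( \sum_{k=1}^K \|P_N e^{-t\partial_x^3}\phi_{k-1}\|_{L_x^p L_t^q}^r \right)^{1/r}.
\]
Applying \eqref{E:smooth_max} to each summand yields the factor $N^{5/p - 1}\|\phi_{k-1}\|_{L^2}$, and the atom normalization closes the estimate.

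For the dual relation \eqref{E:X_sm_dual}, the argument mirrors the one for \eqref{E:X_Str_dual}. By Lemma~\ref{L:duality},
\[
\|P_N u\|_{DU_A^2(I;L^2)} = \sup_{\|v\|_{V_A^2(I;L^2)} \leq 1} \left| \int_I \int_x P_N u\, \bar v\, dx\, dt \right|,
\]
and H\"older in $(x,t)$ gives $|\langle P_N u, v\rangle| \leq \|u\|_{L_x^{p'} L_I^{q'}}\, \|P_N v\|_{L_x^p L_I^q}$. Since $q > 2$ and $p \geq 4$, one can choose $r$ with $2 < r \leq \min(p,q)$; applying the bound \eqref{E:X_sm} just proved with this exponent, followed by Lemma~\ref{L:UVfacts}\eqref{I:3} (which requires $r > 2$), gives
\[
\|P_N v\|_{L_x^p L_I^q} \lesssim N^{5/p - 1} \|\chi_I v\|_{U_A^r L^2} \lesssim N^{5/p - 1} \|\chi_I v\|_{V_A^2 L^2},
\]
and \eqref{E:v-trunc} absorbs the truncation.

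The only real obstacle is the ordering of norms in \eqref{E:X_sm}: with $L_x^p$ outside one cannot directly pass the outer norm through the sum over atoms, as one could in the $L_t^p L_x^q$ setting of Corollary~\ref{C:X_Str}. The two-step argument above (first $\ell^q \hookrightarrow \ell^r$ pointwise in $x$, then Minkowski using $p \geq r$) is precisely what the hypothesis $p, q \geq r$ enables, and this is the point worth emphasizing in the write-up.
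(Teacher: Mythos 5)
Your proof is correct and follows essentially the same route as the paper's: reduce to a $U_A^r$ atom, exploit the disjoint time supports to sum the pieces in $\ell^r$ (the paper phrases your two-step argument as the single observation that $|u|^r=\sum_k|u_k|^r$ together with the triangle inequality in $L_x^{p/r}L_t^{q/r}$, which is exactly where $p,q\geq r$ enters), apply \eqref{E:smooth_max} to each piece, and dualize via Lemma~\ref{L:duality}, Lemma~\ref{L:UVfacts}\eqref{I:3}, and \eqref{E:v-trunc}. The only blemish is notational: the embedding you invoke is $\ell^r\hookrightarrow\ell^q$ for $r\leq q$ (i.e.\ $\|\cdot\|_{\ell^q}\leq\|\cdot\|_{\ell^r}$), not $\ell^q\hookrightarrow\ell^r$, though the inequality you actually use is the correct one.
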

\begin{proof}
  As we argued in the proof of Cor. \ref{C:X_Str}, it suffices to
  prove \eqref{E:X_sm} for $u$ an atom of the form \eqref{E:u-atom}
  (with $p$ replaced by $q$) on $I=[-\infty,+\infty)$.  For such $u$
we write
\[
u = \sum u_k,  \qquad u_k = \chi_{[t_{k-1},t_k)}(t)P_N 
e^{-t\partial_x^3}\phi_{k-1}
\]
Applying \eqref{E:smooth_max} for each $u_k$, it remains to show that 
\[
\| u \|_{L^p_x L^q_t}^r \lesssim \sum_k \| u_k \|_{L^p_x L^q_t}^r
\]
or equivalently
\[
\| |u|^r \|_{L^\frac{p}r_x L^\frac{q}r_t} \lesssim \sum_k \| |u_k|^r
\|_{L^\frac{p}r_x L^\frac{q}r_t}
\]
But $u_k$ have disjoint supports therefore $|u|^r = \sum |u_k|^r$
and the last relation follows by the triangle inequality.

  For \eqref{E:X_sm_dual}, we note that since $(DU_A(I;L^2))^*=V_A^2(I;L^2)$
  \[
  \|P_N u\|_{DU_A^{2}(I;L^2)} = \sup_{\|v\|_{V_A^2(I;L^2)}=1} \left| \int_I
    \int_x P_N u \; \bar v \,dx\,dt \right| \,.\] But by H\"older,
  \[
  \left|\int_I \int_x P_N u \; \bar v \,dx\,dt \right| \leq
  \|u\|_{L_I^{p'}L_x^{q'}} \|P_N v\|_{L_I^pL_x^q} \,,
  \]
  and by \eqref{E:X_sm} and for $q>2$, we have
  \[\|P_N v\|_{L_I^pL_x^q} \leq \|\chi_I v\|_{U_A^q L^2} \leq \|\chi_I
  v\|_{V_A^2 L^2} \,.\] Finally apply \eqref{E:v-trunc} to obtain the
  bound by $\|v\|_{V_A^2(I;L^2)}$.
\end{proof}

\subsection{Bilinear estimate}

\begin{lemma}[Bilinear estimate]
  \label{L:bilinear}
  Suppose $E_1, E_2 \subset \mathbb{R}$ and $M_1, M_2>0$ are dyadic
  values (no restriction to $\geq 1$) such that
  \[\forall \; \xi_1\in E_1 \text{ and }\xi_2\in E_2, \qquad
  |\xi_1+\xi_2|\sim M_1 \text{ and } |\xi_1-\xi_2|\sim M_2 \, .\] Let
  $P_j$ be the $x$-frequency projection operators defined as
  $\widehat{P_j f}(\xi) = \chi_{E_j}(\xi)\hat f(\xi)$ for a function
  $f=f(x)$.  Then,
  \begin{equation}
    \label{E:bi}
    \| P_1 e^{-t\partial_x^3} \phi \; P_2 e^{-t\partial_x^3}\psi \|_{L_t^2L_x^2} \lesssim (M_1M_2)^{-\frac12} \|P_1\phi\|_{L^2}\|P_2\psi\|_{L^2}\,.
  \end{equation}
\end{lemma}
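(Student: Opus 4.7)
The plan is to reduce the estimate to a change-of-variables computation on the characteristic variety of the Airy equation. Writing $f = \widehat{P_1\phi}$ supported in $E_1$ and $g = \widehat{P_2\psi}$ supported in $E_2$, each factor $u_j = P_j e^{-t\partial_x^3}\phi_j$ has space–time Fourier transform supported on the paraboloid $\tau = -\xi^3$, namely $\widetilde{u_j}(\xi,\tau) = 2\pi\, \delta(\tau + \xi^3)\, f_j(\xi)$ restricted to $\xi \in E_j$. Convolution therefore yields
\[
\widetilde{u_1 u_2}(\xi,\tau) \;=\; \int_{\xi_1\in E_1,\ \xi-\xi_1 \in E_2}
\delta\bigl(\tau + \xi_1^3 + (\xi-\xi_1)^3\bigr)\, f(\xi_1)\, g(\xi-\xi_1)\, d\xi_1.
\]

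The key algebraic identity is the factorization of the resonance function. Setting $\xi_2 = \xi - \xi_1$ and $\Phi(\xi_1) = \xi_1^3 + \xi_2^3$ (for $\xi$ fixed), the cubic terms in $\xi_1$ cancel, so $\Phi$ is a quadratic in $\xi_1$ and
\[
\Phi'(\xi_1) \;=\; 3(\xi_1^2 - \xi_2^2) \;=\; 3(\xi_1 - \xi_2)(\xi_1 + \xi_2).
\]
Under the hypothesis $|\xi_1+\xi_2|\sim M_1$ and $|\xi_1-\xi_2|\sim M_2$ on the support, this gives $|\Phi'(\xi_1)| \sim M_1 M_2$ uniformly, and in particular $\Phi'$ never vanishes there, so the degenerate geometry is excluded.

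Next I would apply Plancherel in $(x,t)$ and, for each fixed $\xi$, change variables from $\xi_1$ to $\tau = -\Phi(\xi_1)$. Because $\Phi$ is quadratic there are at most two branches, and on each branch the delta function localizes to one point with Jacobian $d\xi_1 = d\tau/|\Phi'(\xi_1)|$. Using Cauchy–Schwarz to square the finite sum of at most two roots,
\[
\|u_1 u_2\|_{L^2_{t,x}}^2 \;\lesssim\; \int \int_\tau \sum_{\xi_1:\Phi(\xi_1) = -\tau} \frac{|f(\xi_1)|^2\,|g(\xi-\xi_1)|^2}{|\Phi'(\xi_1)|^2}\, d\tau\, d\xi.
\]
Undoing the change of variables on each branch puts one factor $|\Phi'(\xi_1)|$ back into the numerator, leaving a net weight $|\Phi'(\xi_1)|^{-1} \sim (M_1M_2)^{-1}$. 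The resulting integral in $(\xi,\xi_1)$ factors as a tensor product of $L^2$ norms by Fubini (or Plancherel after undoing the $\xi_2$ substitution), giving
\[
\|u_1 u_2\|_{L^2_{t,x}}^2 \;\lesssim\; \frac{1}{M_1 M_2}\, \|f\|_{L^2}^2\, \|g\|_{L^2}^2,
\]
which is the desired bound. The only nontrivial point is the resonance computation yielding $|\Phi'| \sim M_1 M_2$; everything else is bookkeeping with Plancherel and a one-dimensional change of variables.
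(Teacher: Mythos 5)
Your proposal is correct and follows essentially the same route as the paper: Plancherel in space--time, the resonance identity $\partial_{\xi_1}(\xi_1^3+\xi_2^3)=3(\xi_1^2-\xi_2^2)=3(\xi_1-\xi_2)(\xi_1+\xi_2)\sim M_1M_2$, and a change of variables between $(\xi,\tau)$ and $(\xi_1,\xi_2)$ whose Jacobian cancels one power of the denominator. If anything, your explicit Cauchy--Schwarz over the at most two branches of the quadratic is slightly more careful than the paper's bracketed remark that "a proper argument would exhibit these regions separately."
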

\begin{proof}
  \[ [ P_1 e^{-t\partial_x^3} \phi \; P_2
  e^{-t\partial_x^3}\psi]\widehat{\phantom{f}}(\xi,t) =
  \int_{\substack{\xi_1\in E_1 \\ \xi_2\in E_2 \\ \xi=\xi_1+\xi_2}}
  e^{it\xi_1^3}\hat\phi(\xi_1) e^{it\xi_2^3}\hat\psi(\xi_2) \] and
  thus
  \begin{align*}
    [P_1 e^{-t\partial_x^3} \phi \; P_2 e^{-t\partial_x^3}\psi]\widehat{\phantom{f}}(\xi,\tau) &= \int_{\substack{\xi_1\in E_1 \\ \xi_2\in E_2 \\ \xi=\xi_1+\xi_2}} \delta(\tau-\xi_1^3-\xi_2^3)\hat\phi(\xi_1) \hat\psi(\xi_2)\\
    &= \frac{\chi_{E_1}(\xi_1)\chi_{E_2}(\xi_2)
      \hat\phi(\xi_1)\hat\psi(\xi_2)}{3(\xi_1^2-\xi_2^2)}
  \end{align*}
  where, in the last line, $(\xi_1,\xi_2)$ is the solution to
  \[\tau=\xi_1^3+\xi_2^3, \qquad \xi=\xi_1+\xi_2\, .\]
  [In fact, there could be 0, 1, or 2 solutions $(\xi_1,\xi_2)$
  depending upon the particular $(\xi,\tau)$; a proper argument would
  exhibit these regions separately, etc.]  The Jacobian for the change
  of variable $(\xi,\tau) \mapsto (\xi_1,\xi_2)$ is
  \[d\tau d\xi = 3|\xi_1^2-\xi_2^2| d\xi_1d\xi_2 \, .\] The result
  then follows from Plancherel's theorem and this change of variable.
\end{proof}

\begin{corollary}
  \label{C:bilinear}
  Under the hypothesis of Lemma \ref{L:bilinear}, if $u=u(x,t)$,
  $v=v(x,t)$ are any functions, then\footnote{Note the use of the
    truncation functions $\chi_I$ and then evaluation in
    $U_A^{0,2}([-\infty,+\infty))$ or $V_A^{0,2}([-\infty,+\infty))$
    on the right-side.  We are not using the norms $U_A^{0,2}(I)$ or
    $V_A^{0,2}(I)$, since they require vanishing at the left and right
    endpoints of $I$, respectively.  We do not want to impose such a
    condition for finite-length intervals $I$.}
  \begin{equation}
    \label{E:X_bi}
    \| P_1 u \; P_2 v \|_{L_I^2L_x^2} \lesssim (M_1M_2)^{-\frac12} \|\chi_I P_1u\|_{U_A^{2} L^2}\|\chi_I P_2v\|_{U_A^{2} L^2} \,
  \end{equation}
  \begin{equation}
    \label{E:X_bi_V}
    \| P_1 u \; P_2 v \|_{L_I^2L_x^2} \lesssim (M_1M_2)^{-\frac12} 
\left\la \ln \frac{M_1}{M_2} \right\ra^2 \|\chi_I P_1u\|_{V_A^{2} L^2}
\|\chi_I P_2v\|_{V_A^{2} L^2} \,
  \end{equation}
\end{corollary}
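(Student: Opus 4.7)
For \eqref{E:X_bi}, my plan is a standard atomic reduction to Lemma~\ref{L:bilinear}. By bilinearity over the atomic representations of $\chi_I P_1 u$ and $\chi_I P_2 v$ in $U^2_A([-\infty,+\infty); L^2)$, it suffices to treat the case where both factors are single $U^2_A$ atoms,
\[
\chi_I P_1 u = \sum_{j=1}^J \chi_{I_j}(t)\, e^{-t\partial_x^3}\phi_{j-1},
\qquad \chi_I P_2 v = \sum_{k=1}^K \chi_{J_k}(t)\, e^{-t\partial_x^3}\psi_{k-1},
\]
normalized so that $\sum_j\|\phi_{j-1}\|_{L^2}^2\le 1$ and $\sum_k\|\psi_{k-1}\|_{L^2}^2\le 1$. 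The product $P_1 u\cdot P_2 v$ then decomposes as a sum over the pairwise disjoint time pieces $I_j\cap J_k$, so the $L^2_{t,x}$ norm squared splits as an $\ell^2$ sum of the squared norms of the individual pieces. On each piece, Lemma~\ref{L:bilinear} (applied after extending the indicator of $I_j\cap J_k$ to the indicator of $\mathbb{R}$, which only enlarges the left-hand side) yields the bound $(M_1 M_2)^{-1}\|\phi_{j-1}\|_{L^2}^2\|\psi_{k-1}\|_{L^2}^2$. Summing in $j,k$ gives $(M_1 M_2)^{-1}$; taking a square root closes the estimate, and the truncation/endpoint subtleties are handled as in the proof of Corollary~\ref{C:X_Str}, in particular via \eqref{E:v-trunc}.

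For \eqref{E:X_bi_V}, my plan is to bootstrap the $U^2$ bilinear estimate just obtained to its $V^2$ version by applying Lemma~\ref{L:UVfacts}\eqref{I:4} once per slot. Fixing $v$ and viewing $T_v\colon u\mapsto P_1 u\cdot P_2 v$ as linear in $u$, the $U^2$ result provides the strong endpoint with constant $C_2\lesssim (M_1 M_2)^{-\frac12}\|v\|_{U^2_A}$. For the weak endpoint, I would establish a crude bound $\|T_v u\|_{L^2 L^2}\le C_q\|u\|_{U^q_A}$ for some $q>2$ by combining H\"older in space--time with a Strichartz estimate on the $u$-factor (say in $L^6L^6$, from Corollary~\ref{C:X_Str}, gaining a factor $M_1^{-1/6}$) against a Bernstein bound on the $v$-factor (losing a power of $M_2$). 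Lemma~\ref{L:UVfacts}\eqref{I:4} then upgrades the $u$-slot to $V^2_A$ at the cost of a factor $\la\ln(C_q/C_2)\ra$; running the same manoeuvre on the $v$-slot produces the second logarithmic factor and completes the estimate.

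The principal technical obstacle is arranging the exponents in the auxiliary $U^q$ bound so that the ratio $C_q/C_2$ is controlled by $M_1/M_2$ (rather than depending polynomially on $M_1$ and $M_2$ individually), in order to produce the sharp loss $\la\ln(M_1/M_2)\ra^2$ asserted in \eqref{E:X_bi_V}. This requires exploiting the Fourier-support constraint $|\xi_1+\xi_2|\sim M_1$, $|\xi_1-\xi_2|\sim M_2$ at the level of the auxiliary bound, which is precisely what distinguishes the refined bilinear estimate of Lemma~\ref{L:bilinear} from a naive product of one-sided Strichartz estimates.
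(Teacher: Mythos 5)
Your argument follows the paper's proof in all essentials: for \eqref{E:X_bi} you reduce to $U^2_A$ atoms, use the disjoint time supports to split the $L^2_{t,x}$ norm into pieces, apply Lemma \ref{L:bilinear} on each piece, and sum using the $\ell^2$ atom normalization, while for \eqref{E:X_bi_V} you upgrade one slot at a time via Lemma \ref{L:UVfacts}\eqref{I:4}, which is exactly the mechanism of \cite[Cor.~2.18]{HHK} that the paper cites for this step. The one point you leave as a sketch --- choosing the auxiliary $U^q$ bound so that the ratio of constants yields the factor $\la \ln (M_1/M_2) \ra$ rather than a worse loss --- is precisely the detail the paper also delegates to \cite{HHK}, so there is no substantive difference between your proposal and the published argument.
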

\begin{proof}
  It clearly suffices to prove the estimates for
  $I=[-\infty,+\infty)$, since we can insert $\chi_I$ cutoffs on $u$
  and $v$.  We begin noting that if we fix $u=e^{-t\partial_x^3}\psi$,
  and $v$ a $U_A^2$ atom, i.e.
  \[v(x,t)=\sum_{k=1}^K
  \chi_{[t_{k-1},t_k)}(t)e^{-t\partial_x^3}\phi_{k-1} \,, \quad
  \phi_0=0 \,, \quad \sum_{k=1}^K \|\phi_{k-1}\|_{L_x^2}^2=1 \,,\]
  then it follows from Lemma \ref{L:bilinear} that
  \begin{equation}
    \| P_1 u \; P_2 v \|_{L_I^2L_x^2} \lesssim (M_1M_2)^{-\frac12} \|\psi\|_{L^2}  \,.
  \end{equation}
  By linearity in $u$, we obtain the estimate \eqref{E:X_bi} when both
  $u$ and $v$ are $U_A^2$ atoms.  The general case of \eqref{E:X_bi}
  follows by linearity and density.  The estimate \eqref{E:X_bi_V}
  follows from \eqref{E:X_bi} by the argument in \cite[Cor. 2.18]{HHK}
  which appeals to their Prop. 2.17 (our Lemma
  \ref{L:UVfacts}\eqref{I:4}).
\end{proof}

\section{Trilinear estimate}
\label{S:trilinear}

\begin{proposition}[Trilinear estimate]
  \label{P:trilinear}
For all  $-\frac17 < s < \frac14$ and $M \geq 1$ we have
\[
\| \partial_x (u_1u_2u_3) \|_{Y^s_M} \lesssim \| u_1 \|_{X^s_M} \| u_2 \|_{X^s_M} 
\| u_3 \|_{X^s_M} \,.
\] 
\end{proposition}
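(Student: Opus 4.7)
The plan is to decompose everything dyadically in frequency, dualize the $DU^2_A$-norm on the nonlinearity to a quadrilinear form against a $V^2_A$ test function, estimate each block via H\"older together with the bilinear Strichartz and Strichartz/local-smoothing corollaries from \S\ref{S:basic}, and finally sum dyadically. Writing $u_j=\sum_{N_j}P_{N_j}u_j$ and $P_N\partial_x(u_1u_2u_3)$ for the output Littlewood--Paley block, by symmetry one may assume $N_1\geq N_2\geq N_3$; the convolution constraint forces $N\lesssim N_1$, and moreover $N\sim N_1$ unless $N_1\sim N_2$. After squaring and $\ell^2$-summing in $N$, it then suffices, for each admissible tuple $(N,N_1,N_2,N_3)$ and each output interval $I$ of length essentially $N^{4s-1}$, to control
\[
\bigl\|P_N\partial_x\bigl(P_{N_1}u_1\,P_{N_2}u_2\,P_{N_3}u_3\bigr)\bigr\|_{DU_A^2(I;L^2)}
\]
with enough off-diagonal decay in the ratios $N_2/N_1$, $N_3/N_2$, $N/N_1$ to close by Schur's test.

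By Lemma~\ref{L:duality}, this $DU_A^2$-norm equals the supremum over $v$ with $\|v\|_{V_A^2(I;L^2)}\leq 1$ of
\[
\int_I\!\int_{\mathbb R}\partial_x P_N v\cdot P_{N_1}u_1\cdot P_{N_2}u_2\cdot P_{N_3}u_3\,dx\,dt,
\]
and one may integrate the derivative by parts onto whichever factor is lowest-frequency. I would then split into two regimes. In the \emph{off-diagonal} regime, where at least two of the frequencies $\{N,N_1,N_2,N_3\}$ differ by a factor $\gg 1$, I would pair two such factors using the bilinear estimate \eqref{E:X_bi_V} from Corollary~\ref{C:bilinear}, extracting a gain $(M_1M_2)^{-1/2}$ in $L^2_{t,x}$, and estimate the remaining two factors by another bilinear estimate or a Strichartz $L^6_tL^6_x$ / local-smoothing bound from Corollaries~\ref{C:X_Str} and~\ref{C:X_sm}. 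In the \emph{diagonal} regime $N\sim N_1\sim N_2\sim N_3$, no bilinear gain is available, and I would rely on four $L^6_tL^6_x$ Strichartz estimates (Corollary~\ref{C:X_Str}) combined with the shortness of the time interval.

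A separate bookkeeping step is to align the differing time scales. When an input frequency $N_j$ exceeds $N$, the norm $\|u_j\|_{X^s_M}$ only controls the $U^2_A$-norm on sub-intervals of length $N_j^{4s-1}<|I|$, so I would partition $I$ into $\sim(N/N_j)^{4s-1}$ such sub-intervals, apply the single-scale estimate on each, and reassemble using Cauchy--Schwarz together with the $V^2_A$-truncation bound \eqref{E:v-trunc} applied to $v$. When $N_j\leq N$ the input interval is already at least as long as $I$, and restriction to $I$ is free. The $H^s_M$-weights are accounted for by the routine relations $\|P_{N_j}u_j\|_{U_A^2 L^2}\lesssim \max(N_j,M^{1/2})^{-s}\|P_{N_j}u_j\|_{U_A^2 H^s_M}$ and symmetrically on the output side.

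The hard step will be the diagonal regime. There one must balance the loss $N_1$ from $\partial_x$, the weight mismatch $\sim N_1^{-2s}$ between input and output Sobolev norms, the Strichartz loss for four $L^6_tL^6_x$ factors, and the sum over the $\sim N_1^{1-4s}/|I|$ time sub-intervals covering $I$. A short computation should identify the range of $s$ for which the resulting power of $N_1$ becomes negative and yields the stated threshold $s>-\tfrac17$; the introduction notes that with a more refined pairing, and at the cost of an additional energy correction, this can be extended to $s\geq-\tfrac16$. Once the per-tuple estimate carries enough decay in the frequency ratios, the dyadic summation over $N_1,N_2,N_3,N$ closes by Schur's test and Cauchy--Schwarz, delivering the stated trilinear bound.
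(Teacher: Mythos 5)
Your overall scaffolding --- dyadic decomposition, dualizing $DU^2_A$ against a $V^2_A$ test function, subdividing the output interval to match the shorter input time scales, and closing with bilinear Strichartz gains --- is the same as the paper's. But there is a genuine gap at the one place where the threshold $s>-\frac17$ is actually decided, namely the regime $N\ll N_1\sim N_2\sim N_3$ (output frequency far below three comparable input frequencies). You file this under your ``off-diagonal'' regime and propose to pair two separated factors bilinearly and handle the remaining two by ``another bilinear estimate or a Strichartz $L^6_tL^6_x$/local-smoothing bound.'' The problem is that after pairing $P_Nv$ with one high factor, the remaining pair $u_{N_i}u_{N_j}$ has $N_i\sim N_j$, and Lemma~\ref{L:bilinear} only gains $(|\xi_i+\xi_j|\,|\xi_i-\xi_j|)^{-1/2}$, one factor of which can degenerate; the $L^6\times L^6$ fallback loses too much (and $L^2_{t,x}\times L^6\times L^6$ does not even close to $L^1$ by H\"older). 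The paper's Case~3 resolves this by a sign argument: after permuting indices one may assume $\xi_1,\xi_2$ have one sign and $\xi_3$ the other, and since $\xi$ is small relative to $N_3$, \emph{both} pairs $(u_N,u_{N_2})$ and $(u_{N_1},u_{N_3})$ then have sum and difference of size $\sim N_3$, yielding the full $N_3^{-2}$ bilinear gain. Combined with the $(N_3/N)^{1-4s}$ interval-subdivision loss and the derivative, this gives $\alpha=N_3^{-1-6s}N^{5s}N_1^{-s}$, whose summation produces exactly the $-\frac17$ threshold. Without this choice of which factors to pair, your argument does not reach the stated range.

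A secondary point: you locate the ``hard step'' in the fully diagonal regime $N\sim N_1\sim N_2\sim N_3$, but that is not where the difficulty lies, and four $L^6_tL^6_x$ bounds do not close there: the product only lands in $L^{3/2}_{t,x}$ and the spatial domain is unbounded, so H\"older does not reach $L^1_{t,x}$. The paper handles all of $N_1,N_2,N_3\lesssim N$ in one stroke with no bilinear estimate and no interval subdivision: bound $DU^2_A$ by $L^1_JL^2_x$, pay $|J|^{1/2}=N^{2s-\frac12}$ by Cauchy--Schwarz in time, and apply two maximal-function bounds $L^4_xL^\infty_J$ to the low factors and local smoothing $L^\infty_xL^2_J$ to the differentiated high factor, giving $\alpha=N^{2s-\frac12}N_1^{\frac14-s}N_2^{\frac14-s}\lesssim 1$ for all $s<\frac14$. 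Your interval-reassembly bookkeeping and weight conversions are essentially right (the paper simply takes the full $(N_3/N)^{1-4s}$ loss rather than Cauchy--Schwarz over subintervals), but as written the proposal is missing the one idea that produces the claimed range of $s$.
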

\begin{proof}
  We insert frequency projections $P_{N_j}$, $P_N$ where $N,N_j \geq M$.
  Denoting the truncated functions by $u_{N_j} = P_{N_j} u_j$ for $N_j > M$ while $ u_{N_j} =
  P_{<M} u_j$ for $N_j = M$, we reduce matters to
  proving, for an interval $|J| = N^{4s-1}$ with $N > M$, a bound of the type
  \begin{equation}
    \label{E:tri}
    \| P_N  \partial_x (u_{N_1} \; u_{N_2} \; u_{N_3}) \|_{DU_A^{2}(J;H^s_M)} \leq \alpha (N,N_1,N_2,N_3) \prod_{j=1}^3 \sup_{|I_j|=N_j^{4s-1}}\|\chi_{I_j} u_{N_j} \|_{U_A^{2} H^s_M}
  \end{equation}
  as well as the similar bound with $ P_N$ replaced by $P_{<M}$. This
  can be rewritten as
  \begin{align*}
    \indentalign \| P_N \partial_x (u_{N_1} \; u_{N_2} \; u_{N_3}) \|_{DU_A^2(J;L^2)} 
\leq \alpha (N,N_1,N_2,N_3) \frac{N_1^sN_2^sN_3^s}{N^s} \prod_{j=1}^3
    \sup_{|I_j|=N_j^{4s-1}}\|\chi_{I_j} u_{N_j} \|_{U_A^2L^2}
  \end{align*}
  Here $\alpha $ should have certain summability properties.  As a
  general rule, we need at least that $|\alpha (N,N_1,N_2,N_3)|
  \lesssim 1$, and in some cases, need a slight power decay in $N$
  and/or $N_j$ to insure the summation with respect to all indices.
\bigskip

\noindent \textit{Case 1}.  $N_1, N_2, N_3 \lesssim N$.  We can assume
that $N_1 \leq N_2 \leq N_3 \sim N$.  In this case, all $I_j$ have
length $\geq |J|$ and can be neglected. We distribute the derivative,
which in the worst case applies to $u_{N_3}$. By \eqref{E:X_Str_dual}
and \eqref{E:X_sm},
  \begin{align*}
    \| P_N (u_{N_1} \; u_{N_2} \;  \partial_x u_{N_3}) \|_{DU_A^2(J;L^2)}
    &\lesssim \|u_{N_1} \; u_{N_2} \;  \partial_x u_{N_3}\|_{L_J^1L_x^2} \\
    &\lesssim |J|^\frac12 \|u_{N_1} \; u_{N_2} \;  \partial_x u_{N_3} \|_{L_J^2L_x^2}\\
    &\lesssim N^{2s-\frac12} \| u_{N_1} \|_{L_x^4 L_J^\infty} \|  u_{N_2} \|_{L_x^4L_J^\infty}  \| \partial_x u_{N_3}\|_{L_x^\infty L_J^2} \\
    &\lesssim N^{2s-\frac12} N_1^\frac14\|\chi_{J} u_{N_1}
    \|_{U_A^2 L^2} N_2^\frac14\|\chi_{J} u_{N_2} \|_{U_A^2 L^2}
    \| \chi_{J} u_{N_3} \|_{U_A^2 L^2} 
  \end{align*}
  Thus we have \eqref{E:tri} with $\alpha =
  N^{2s-\frac12}N_1^{\frac14-s}N_2^{\frac14-s}$, which suffices 
 for all $s$.

\bigskip

  \noindent \textit{Case 2}.\! $N_1 \lesssim\! N \ll\! N_2\sim \!N_3$.  
The $u_2, u_3$ terms need to be evaluated in norms restricted to
  intervals $I$ of size $|I|= N_3^{4s-1}$.  We divide $J$ into $|J|/|I| =
  (N_3/N)^{1-4s} \gg 1$ intervals of size $|I| = N_3^{4s-1}$.  For
  $u\in V_A^2(J;L^2)$ we estimate by duality
  (Lemma \ref{L:duality})
  \begin{align*}
     \Big| \int_{J} \int_x u_{N_1}  u_{N_2} \, u_{N_3} \, u_N \, dx\, dt \Big| 
    &\leq \Big( \frac{N_3}{N} \Big)^{1-4s} \sup_{\substack{I\subset J\\ |I|=N_3^{4s-1}}} \Big| \int_{ I} \int_x u_{N_1} \, u_{N_2} \, u_{N_3} \, u_N  \, dx\, dt \Big| \\
    &\leq \Big( \frac{N_3}{N} \Big)^{1-4s} \sup_{\substack{I\subset
        J\\ |I|=N_3^{4s-1}}} \|u_{N_1} \, u_{N_2}
    \|_{L_I^2L_x^2} \|u_{N}  \, u_{N_3} \|_{L_I^2L_x^2} \,.
  \end{align*}
  Using the bilinear estimate \eqref{E:X_bi},\eqref{E:X_bi_V}
 we bound the above by
\[
\Big( \frac{N_3}{N} \Big)^{1-4s} N_3^{-2} \left\la \ln \frac{N_3}{N} \right\ra^2 
\sup_{\substack{I\subset J\\ |I|=N_3^{4s-1}}} 
\|\chi_I u_{N_1}  \|_{U_A^2 L^2} \|\chi_I u_{N_2} \|_{U_A^2 L^2} 
\|\chi_I u_{N_3} \|_{U_A^2 L^2} \|\chi_I  u_N \|_{V_A^2 L^2} \,.
\]
Finally, we apply \eqref{E:v-trunc} ($\|\chi_IP_N u\|_{V_A^2} \leq
2\|P_N u\|_{V_A^2(J)}$).  Adding a factor of $N$ to account for the
derivative in \eqref{E:tri} we obtain
\[
\alpha =
N_3^{-1-6s}N^{5s}N_1^{-s} \left\la \ln \frac{N_3}{N} \right\ra^2
\]
 so this case is handled if $s\geq -\frac16$.

\bigskip 

\noindent \textit{Case 3}.  $N \ll N_1 \leq N_2=N_3$.  We can assume
that $\xi_1$, $\xi_2$ have the same sign and that $\xi_3$ has the
opposite sign.  [Indeed, if $N_1\ll N_2$, then this is achieved by
permuting $N_2$ and $N_3$ if necessary, and if $N_1\sim N_2\sim N_3$,
then this can be arranged by permuting the indices.]  Note that then
obviously we have $|\xi_1-\xi_3|\sim N_3$, but also since $N \ll
N_2\sim N_3$, we have $|\xi_1+\xi_3|=|\xi+\xi_2|\sim N_3$ and
$|\xi-\xi_2|\sim N_3$.

We again argue by duality (Lemma \ref{L:duality}) and divide into
subintervals of size $|I|=N_3^{4s-1}$.  For $v\in V_A^2(J;L^2)$,
\begin{align*}
  \Big| \int_{t\in J} \int_x u_{N_1} \, u_{N_2} \, u_{N_3} \,  u_N \, dx\, dt \Big| 
  &\leq \Big( \frac{N_3}{N} \Big)^{1-4s}
 \sup_{\substack{I\subset J\\ |I|=N_3^{4s-1}}} 
\Big| \int_{t\in I} \int_x u_{N_1} \, u_{N_2} \, u_{N_3} \, u_N u \, dx\, dt \Big| \\
  &\leq \Big( \frac{N_3}{N} \Big)^{1-4s} \sup_{\substack{I\subset J\\
      |I|=N_3^{4s-1}}} \|u_{N} \, u_{N_2} \|_{L_I^2L_x^2}
  \|u_{N_1} \, u_{N_3} \|_{L_I^2L_x^2} \,.
\end{align*}
We then apply the bilinear estimate \eqref{E:X_bi}, \eqref{E:X_bi_V}
to bound the above by
\[
\leq
 \Big( \frac{N_3}{N} \Big)^{1-4s} N_3^{-2} \left\la \ln \frac{N_3}{N} \right\ra^2 \sup_{\substack{I\subset J\\ |I|=N_3^{4s-1}}}  \| \chi_I u_{N_1} \|_{U_A^2L^2}
 \|\chi_I u_{N_2}  \|_{U_A^2L^2} \| \chi_I u_{N_3} \|_{U_A^2 L^2} 
\| \chi_I u_N \|_{V_A^2 L^2} 
\]
Finally, we apply \eqref{E:v-trunc}.  Thus we have $\alpha =
N_3^{-1-6s}N^{5s}N_1^{-s}$, which is satisfactory if $s>-\frac17$.

\end{proof}

\section{Energy bound}
\label{S:energy}

For expositional convenience, in this section, we will assume that we
are in the more difficult case $s\leq 0$.  We study the almost
conservation of the $H^s$ norm using a variant of the $I$-method of
Colliander-Keel-Staffilani-Takaoka-Tao \cite{CKSTT}.  The main result
of this section is as follows:

\begin{proposition}[Energy bound]
  \label{P:energybound}
  For all $-\frac18 \leq s \leq  0$,  $M > 0$ and 
$u$ solving \eqref{E:mKdV}  we have the following bound in the 
time interval $[0,1]$:
\begin{equation}\label{l2-energy} 
 \|u\|_{\ell^2 L^\infty H^s_M}^2  \leq c( \|u\|_{\ell^2 L^\infty H^s_M}^4+  
  \|u\|_{X^s_M}^6)
\end{equation}
\end{proposition}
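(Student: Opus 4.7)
The plan is an $I$-method style modified-energy argument applied to each Littlewood-Paley piece $P_N u$. Differentiating $\|P_N u(t)\|_{H^s_M}^2$ along the mKdV flow and using skew-symmetry of $\partial_x^3$, the linear contribution vanishes, leaving after symmetrization a quartic form
\[
\frac{d}{dt}\|P_N u\|_{H^s_M}^2 \;=\; Q_N^{(4)}(u)
\;=\; \int_{\xi_1+\xi_2+\xi_3+\xi_4 = 0} m_N(\xi_1,\xi_2,\xi_3,\xi_4) \prod_{j=1}^4 \hat u(\xi_j)\, d\Sigma,
\]
with symbol $m_N(\xi) = \tfrac{i}{4} \sum_{j=1}^4 \xi_j |\chi_N(\xi_j)|^2 (|\xi_j|^2 + M)^s$. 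The crucial algebraic observations are: on $\{\sum \xi_j = 0\}$ one has the factorization $\Omega := \xi_1^3+\xi_2^3+\xi_3^3+\xi_4^3 = 3(\xi_1+\xi_2)(\xi_1+\xi_3)(\xi_1+\xi_4)$; and on the resonant set $\mathcal{R} := \{\sum \xi_j = 0,\ \Omega = 0\}$ the frequencies pair into opposite-sign pairs, so the evenness of $(|\xi|^2+M)^s$ forces $m_N = 0$.

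Next I would introduce the quartic correction
\[
R_N(u) \;=\; \int_{\sum \xi_j = 0} \frac{m_N}{\Omega} \prod_j \hat u(\xi_j)\, d\Sigma,
\]
with a smooth cutoff away from a thin neighborhood of $\mathcal{R}$, and set $E_N(u) := \|P_N u\|_{H^s_M}^2 + R_N(u)$. Substituting $\partial_t u = -\partial_x^3 u \mp \partial_x u^3$ into the time derivative of $R_N$, the linear part of $\partial_t u$ produces $-Q_N^{(4)}$ up to the cutoff error, while the nonlinear part produces a sextic form $S_N^{(6)}$. Hence
\[
\frac{d}{dt} E_N(u) \;=\; Q_N^{\mathrm{res}} + S_N^{(6)},
\]
where $Q_N^{\mathrm{res}}$ is the near-resonant quartic remainder supported where $|\Omega|$ is small compared with $N^3$.

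Three estimates are then needed. First, $|R_N(u(t))| \lesssim \|u(t)\|_{\ell^2 H^s_M}^4$, summable in $N$; this follows from $|\Omega| \gtrsim N^3$ off the resonance together with the $\xi$-factor in $m_N$, which makes $m_N/\Omega$ behave like a symbol of order $-2$ and is bounded via a dyadic Littlewood-Paley decomposition and Cauchy-Schwarz. Second, the near-resonant quartic satisfies $\int_0^1 |Q_N^{\mathrm{res}}|\,dt \lesssim \|u\|_{\ell^2 L^\infty H^s_M}^4$: here one uses the vanishing of $m_N$ on $\mathcal{R}$ to compensate the small $|\Omega|$, reducing the integrand to $\int (P_N u)^2 \cdot P_{N'} u^2$-type expressions that are bounded pointwise in time. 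Third, $\int_0^1 |S_N^{(6)}|\,dt \lesssim \|u\|_{X^s_M}^6$, obtained by reorganizing the six factors as three bilinear pairings at well-separated frequencies and invoking Corollary~\ref{C:bilinear} together with the Strichartz/local-smoothing Corollaries~\ref{C:X_Str} and~\ref{C:X_sm} on the short intervals of length $N^{4s-1}$ built into $X^s_M$.

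Combining these via the fundamental theorem of calculus applied to $E_N$ on $[0,t]$, square-summing in $N$, and absorbing $|R_N(u)|$ into the left-hand side through (i) yields \eqref{l2-energy}. The main obstacle will be the second estimate: at the diagonal/near-resonance no dispersive gain is available, and one has to extract precisely the right order of vanishing from $m_N$ at $\mathcal{R}$ and balance it against the $H^s_M$ weights on the four factors. This is the step that forces the threshold $s = -\tfrac18$, even though the trilinear estimate in Proposition~\ref{P:trilinear} is itself valid down to $s = -\tfrac17$.
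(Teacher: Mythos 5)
Your overall strategy (modified energy with a quartic correction $m/\Omega$ and a sextic remainder) is the right one, but there is a genuine gap at the step you yourself flag as the main obstacle, and it stems from a structural choice that the paper avoids. You insert a cutoff away from a neighborhood of the resonant set $\mathcal{R}$ and are then left with a near-resonant quartic remainder $Q_N^{\mathrm{res}}$, which you propose to bound by $\|u\|_{\ell^2 L^\infty H^s_M}^4$ using only pointwise-in-time information. The paper's key observation (Lemma \ref{L:b4bd}) is that no cutoff is needed: because the symbol $a$ is even, the numerator $\sum_j \xi_j a(\xi_j)$ vanishes on the \emph{entire} zero set of $\Omega=(\xi_1+\xi_2)(\xi_1+\xi_3)(\xi_2+\xi_3)$ to exactly the order required, so that $b_4=m/\Omega$ extends to a symbol that is smooth on dyadic scales with the explicit size $b_4\sim a(N_2)N_4^{-2}$ for $N_1\le N_2\le N_3\sim N_4$. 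Consequently the quartic term is cancelled \emph{identically}, $\frac{d}{dt}(E_0+E_1)=R_6$, and only two estimates remain: the fixed-time bound on $E_1$ (Corollary \ref{C:E-bds}) and the sextic bound \eqref{R6bd}. Your $Q_N^{\mathrm{res}}$ does not appear at all, and it is not clear your proposed bound for it closes: a crude accounting (the symbol gains a factor $\delta$ from the vanishing of $m_N$ on $\mathcal{R}$, while the off-resonance correction $R_N$ loses $\delta^{-1}$ in its symbol) leads to a tension in the choice of cutoff width that does not obviously resolve for $s<0$ without dispersive input. Relatedly, your diagnosis of the threshold is wrong: in the paper $s\ge-\frac18$ is forced by the \emph{sextic} term (the factor $M_6^{-1-8s}$ in Case (a) of the proof of \eqref{R6bd}), not by any near-resonant quartic.

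Two further structural differences are worth noting. First, the paper does not run the argument on sharp Littlewood--Paley pieces $\|P_Nu\|_{H^s_M}^2$; it proves the bound \eqref{energy} for the family of norms $H^{a_N}$ with slowly varying symbols $a_N\in S_M$ (constant up to frequency $N$, decaying like $|\xi|^{-1/2}$ beyond), and recovers the $\ell^2$ statement by summing. The regularity and slow-decay conditions \eqref{adiff}--\eqref{asize} on $a$ are precisely what make the smooth-division lemma and the dyadic summations in Corollary \ref{C:E-bds} and in the $R_6$ estimate work; with the sharp, frequency-localized symbol $\xi|\chi_N(\xi)|^2(|\xi|^2+M)^s$ these bounds do not follow in the same way. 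Second, your third estimate ($\int_0^1|S_N^{(6)}|\,dt\lesssim\|u\|_{X^s_M}^6$) is the right shape, but the paper needs the sharper bilinear form $\|u\|_{X^s_M}^4\|u\|_{X^{a_N}_M}^2$ in order for the $\ell^2$ summation over $N$ to go through; a bound with six copies of $\|u\|_{X^s_M}$ per dyadic block would not square-sum. You should replace the cutoff-plus-remainder step by the smooth division of the symmetrized symbol by $\Omega$, carried out in the symbol class $S_M$.
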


Due to the $l^2$ dyadic summation on the left we cannot simply 
obtain a uniform in time bound for the $H^s$ norm of $u$.
Instead for small $\epsilon > 0$ we introduce a class $S_M$
of real smooth positive even symbols $a(\xi)$ which have the 
following  properties:
\begin{itemize}
\item[(i)]  $a(\xi)$ is constant for $|\xi| \leq M$.
\item[(ii)] Regularity:
\begin{equation} \label{adiff}
|\partial_\xi^\alpha a(\xi)| \leq c_\alpha a(\xi) \la \xi \ra^{-\alpha}
\end{equation}
\item[(iii)] Decay properties 
\begin{equation}\label{asize}
- \frac12  \leq \frac{d \log a(\xi)}{d \log (1+ \xi^2)} \leq 0
\end{equation}
\end{itemize}
The latter property implies that  $a(\xi)$  is nonincreasing
but decays  no faster than\footnote{In effect decay rates 
up to $|\xi|^{-1}$ are still acceptable, but not needed here.}
 $|\xi|^{-\frac12}$.
For $a \in S_M$ we will prove the uniform bound 
\begin{equation}\label{energy} 
 \|u\|_{L^\infty H^a}^2  \leq \|u(0)\|_{H^a}^2  + c( 
\|u\|_{L^\infty H^s_M}^2 \|u\|_{ L^\infty H^a}^2 +  
  \|u\|_{X^s_M}^4 \|u\|_{X^a_M}^2)
\end{equation}
which implies the desired bound \eqref{l2-energy}. To see this, for
each dyadic $N \geq M$ we consider a symbol 
$a_{N} \in S_M$ such that
\[
a_{N}(\xi) \defeq \begin{cases} N^{2s} & \text{if } |\xi|\leq N \\
  N^{\frac12+2s} |\xi|^{-\frac12} & \text{if }|\xi|\geq 2N \end{cases} \, .
\]  
Then \eqref{l2-energy} follows from \eqref{energy} applied to $a_N$
due to  the obvious relations
\[
\| u\|_{\ell^2 L^\infty H^s_M}^2 \approx \sum_{N \geq M} 
\| u\|_{L^\infty H^{a_N}}^2 ,
\]
\[ 
 \|u\|_{X^s_M}^2
\approx  \sum_{N \geq M}
\| u\|_{X^a_N}^2
\]
It remains to prove the bound \eqref{energy}. We define the 
energy functional 
\[
E_0(u)  \defeq  \la A(D)u, u \ra =   \| u\|_{H^a}^2
\]
and compute its derivative  along the flow.
Since $a(\xi)$ is even and $u$ is real, $A(D)u$ is
real.  Also, $A(D)$ is self-adjoint since $a(\xi)$ is
real.  Thus, substituting \eqref{E:mKdV},
\[
\frac{d}{dt} E_0(u) = 
R_4(u) \defeq \pm 2 \la A(D)\partial_x u, u^3
\ra \,.
\]
Using the fact that $u$ is a real valued function,
which implies that $\overline{\hat u(-\xi)} = \hat u
(\xi)$, we write $R_4$ as a multilinear operator in Fourier
space:
\[
R_4(u) = 
\pm 2 \int_{P_4} i\xi_1 a(\xi_1) \; \hat u(\xi_1) \hat
u(\xi_2) \hat u(\xi_3) \hat u(\xi_4) \, d\sigma \,,
\]
where $P_4 = \{ \, (\xi_1,\xi_2,\xi_3,\xi_4)\in\mathbb{R}^4 \; | \;
\xi_1+\xi_2+\xi_3+\xi_4=0 \, \}$.  This expression for $R_4$ can be
symmetrized as
\[
R_4(u) = \pm \frac12 \int_{P_4} i ( \xi_1 a(\xi_1)+ \xi_2 a(\xi_2)+
\xi_3 a(\xi_3)+ \xi_4 a(\xi_4)) \hat u(\xi_1) \hat u(\xi_2) \hat
u(\xi_3) \hat u(\xi_4) \, d\sigma \,.
\]
We seek to cancel this term by perturbing the energy to $E_0+E_1$,
where $E_1$ has the form
\[
E_1(u) = \int_{P_4} b_4(\xi_1,\xi_2,\xi_3,\xi_4) \hat u(\xi_1) \hat
u(\xi_2) \hat u(\xi_3)\hat u(\xi_4) \, d\sigma \,.
\]
To determine the proper choice for $b_4$, we compute
\begin{align*}
  \frac{d}{dt} E_1(u) = &\int_{P_4} b_4(\xi_1,\xi_2,\xi_3,\xi_4) 
i(\xi_1^3+\xi_2^3+\xi_3^3+\xi_4^3) \; 
\hat u(\xi_1) \hat u(\xi_2) \hat u(\xi_3) \hat u(\xi_4) \, d\sigma 
+ R_6(u) \,,
\end{align*}
where $R_6(u)$ has the form (if we for convenience go ahead and 
assume that $b_4$ is symmetric under exchange of any pair 
from $\xi_1$, $\xi_2$, $\xi_3$ and $\xi_4$)
\[
R_6(u) = \mp \frac14 \int_{P_4} b_4(\xi_1,\xi_2,\xi_3,\xi) i\xi
\widehat{u^3}(\xi) \hat u(\xi_1) \hat u(\xi_2) \hat u(\xi_3) \,
d\sigma \,.
\]
Now we see that the proper choice of $b_4$ to cancel the term $R_4$ is
\[
b_4(\xi_1,\xi_2,\xi_3,\xi_4) = \pm \frac12 \frac{\xi_1a(\xi_1)+
  \xi_2a(\xi_2) + \xi_3a(\xi_3) +
  \xi_4a(\xi_4)}{\xi_1^3+\xi_2^3+\xi_3^3+\xi_4^3}
\]
In conclusion, we have
\[
\frac{d}{dt}(E_0+E_1)(u) = R_6(u) \,.
\]
Hence in order to prove  \eqref{energy} we need to establish
the following two bounds:
\begin{equation} \label{E1bd}
E_1(u) \lesssim \|u\|_{H^s_M}^2 \|u\|_{H^a}^2 
\end{equation}
respectively
\begin{equation} \label{R6bd}
\int_{0}^1 R_6(u(t)) dt \lesssim   
\|u\|_{X^s_M}^4 \|u\|_{X^a}^2
\end{equation}
In order to do this we need to study the size and regularity 
of $b_4$.

\begin{lemma} 
Let $a \in S_M$. Then there exists a  symbol $b_4$
in $\mathbb R^4$ so that
\begin{equation}
\xi_1a(\xi_1)+ \xi_2a(\xi_2) + \xi_3a(\xi_3) + \xi_4a(\xi_4)
= b_4 (\xi_1^3+\xi_2^3+\xi_3^3+\xi_4^3)  \qquad \text{ on } P_4
\label{b4-eq}\end{equation}
with the following size and regularity
in dyadic regions $\{|\xi_j| \sim N_j > M \}$ respectively 
$\{\xi_j \lesssim N_j = M\}$:
\begin{equation}\label{b4bd}
| \partial_{\xi_1}^{\alpha_1}   \partial_{\xi_2}^{\alpha_2}  
\partial_{\xi_3}^{\alpha_3} \partial_{\xi_4}^{\alpha_4}b_4
(\xi_1,\xi_2,\xi_3,\xi_4) | \leq c_{\alpha}    b_4 (N_1,N_2,N_3,N_4) 
 N_1^{-\alpha_1}
  N_2^{-\alpha_2}   N_3^{-\alpha_3} N_4^{-\alpha_4},
\end{equation}
where
\[
 b_4 (N_1,N_2,N_3,N_4) = a(N_2) N_4^{-2} \qquad \text{when }
N_1 \leq N_2 \leq N_3 \sim N_4
\]
\label{L:b4bd}
\end{lemma}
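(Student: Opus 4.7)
The plan is to define $b_4$ explicitly as a quotient, verify smoothness via Hadamard's lemma applied to three resonance hyperplanes, and obtain the size and regularity bounds through an iterated integral mean-value representation. Set $\phi(\xi) = \xi a(\xi)$. The first ingredient is the cubic identity on $P_4$,
\[
\xi_1^3+\xi_2^3+\xi_3^3+\xi_4^3 = 3(\xi_1+\xi_2)(\xi_1+\xi_3)(\xi_1+\xi_4),
\]
obtained by substituting $\xi_4 = -(\xi_1+\xi_2+\xi_3)$ into the expansion of $(\xi_1+\xi_2+\xi_3)^3$ and using that $-(\xi_2+\xi_3) = \xi_1+\xi_4$ on $P_4$. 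This identifies the denominator in \eqref{b4-eq} and fixes
\[
b_4 \defeq \frac{\phi(\xi_1)+\phi(\xi_2)+\phi(\xi_3)+\phi(\xi_4)}{3(\xi_1+\xi_2)(\xi_1+\xi_3)(\xi_1+\xi_4)}.
\]
Since $a$ is even, $\phi$ is odd. On each of the three resonance hyperplanes $\{\xi_1+\xi_j = 0\}\cap P_4$, $j\in\{2,3,4\}$, the complementary pair also sums to zero (for instance $\xi_1+\xi_2=0$ forces $\xi_3+\xi_4=0$), so the numerator vanishes there, and Hadamard's division lemma guarantees $b_4$ extends to a smooth function on $P_4$.

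For the quantitative bound \eqref{b4bd} I would represent $b_4$ as an iterated integral. First,
\[
\phi(\xi_1)+\phi(\xi_2) = (\xi_1+\xi_2)\, q_{12}(\xi_1,\xi_2), \qquad q_{12}(\xi_1,\xi_2) = \int_0^1 \phi'\bigl((1-t)(-\xi_2)+t\xi_1\bigr)\,dt,
\]
and analogously for the pair $(\xi_3,\xi_4)$. Since $\xi_3+\xi_4 = -(\xi_1+\xi_2)$ on $P_4$, the numerator becomes $(\xi_1+\xi_2)[q_{12}-q_{34}]$. Using the evenness of $\phi'$ one verifies directly that $q_{12}-q_{34}$ vanishes on both $\{\xi_1+\xi_3=0\}$ (since $q_{34}(-\xi_1,-\xi_2) = q_{12}(\xi_1,\xi_2)$) and on $\{\xi_1+\xi_4=0\}$. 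Two further applications of the fundamental theorem of calculus extract these remaining factors, producing an explicit representation of $b_4$ as a three-fold integral of $\phi'''$ evaluated at affine combinations of $\xi_1,\xi_2,\xi_3,\xi_4$ with parameters in $[0,1]^3$. The symbol bounds $|\phi^{(k)}(\xi)| \lesssim a(\xi)\langle\xi\rangle^{1-k}$, immediate from \eqref{adiff}, combined with the monotonicity from \eqref{asize}, then control the integrand. The derivative estimates in \eqref{b4bd} follow by differentiating under the integral sign: each $\partial_{\xi_i}$ either reduces the order of $\phi$-differentiation (gaining a factor $\langle\xi\rangle^{-1}\sim N_i^{-1}$) or hits an affine combination (contributing an $O(1)$ factor), yielding the claimed scaling.

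The main obstacle is matching the sharp weight $a(N_2) N_4^{-2}$ in the almost-resonant regime where $|\xi_1+\xi_2|$ is much smaller than $N_2$. In that regime the affine combinations arising in the triple integral can drift toward the origin, where $|\phi'''(\xi)| \lesssim a(\xi)\langle\xi\rangle^{-2}$ is maximal, and a naive pointwise supremum of the integrand would give only the cruder weight $a(M)/M^2$ or $a(N_1)/N_4^2$ rather than $a(N_2)/N_4^2$. To recover the sharp weight I would exploit further antisymmetry cancellations coming from the fact that $\phi^{(\textrm{odd})}$ is even while $\phi^{(\textrm{even})}$ is odd, which is the same mechanism that forces the integral $\int_0^1\!\int_0^1 \phi''(C_{s,\tau})\,ds\,d\tau$ to vanish identically on $\{\xi_1+\xi_3=0\}$. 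Tracking these cancellations through a careful case analysis on the relative signs and magnitudes of the $\xi_i$ is the heart of the estimate; once this is done the derivative bounds \eqref{b4bd} follow by repeating the same analysis with an extra $\partial_{\xi_i}$ moved inside.
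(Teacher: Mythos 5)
Your algebraic setup is sound: the factorization of $\xi_1^3+\xi_2^3+\xi_3^3+\xi_4^3$ on $P_4$, the definition of $b_4$ as the explicit quotient, and the verification that the numerator vanishes on each resonance hyperplane (via oddness of $\phi(\xi)=\xi a(\xi)$, e.g.\ $q_{34}=q_{12}$ on $\{\xi_1+\xi_3=0\}\cap P_4$) are all correct and coincide with the starting point of the paper's argument. But the lemma is a quantitative statement, and the quantitative part is exactly what your proposal defers. You yourself observe that the three-fold integral representation of $b_4$ as an average of $\phi'''$ over affine combinations of the $\xi_j$ yields only the weight $a(M)M^{-2}$ or $a(N_1)N_4^{-2}$ rather than $a(N_2)N_4^{-2}$, because those affine combinations drift through the origin where $|\phi'''|$ peaks; you then promise to recover the sharp weight by ``further antisymmetry cancellations'' and ``a careful case analysis'' which you do not carry out. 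That case analysis \emph{is} the proof of the lemma: as written, your argument establishes only the qualitative smoothness of $b_4$ (immediate from Hadamard's lemma and not the content of the statement) and leaves \eqref{b4bd} unproved.

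The gap is not merely expository, because the uniform $\phi'''$-representation is the wrong tool for the sharp weight and the paper's proof deliberately avoids it. The paper splits into the cases $N_1\ll N_2\le N_3\sim N_4$, $N_1\sim N_2\ll N_3\sim N_4$, and $N_1\sim N_2\sim N_3\sim N_4$, and in each case divides only by those factors of the denominator that can actually be small, keeping the remaining factors as elliptic prefactors of size $N_4$. The key quantitative input is that the single divided difference $g(\xi,\eta)=(\xi a(\xi)+\eta a(\eta))/(\xi+\eta)$ is already smooth on the dyadic scale with size $a(\max(|\xi|,|\eta|))$: when the segment joining $\xi$ and $-\eta$ avoids the origin this follows from $|\phi'|\lesssim a$ and \eqref{adiff}, while when the segment crosses the origin the denominator $\xi+\eta$ is elliptic and one bounds the numerator by $\max(|\xi|,|\eta|)\,a(\max(|\xi|,|\eta|))$ using the monotonicity of $\xi\mapsto\xi a(\xi)$ encoded in \eqref{asize} --- a hypothesis your argument never invokes, and without which the claimed weight fails. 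The one remaining singular division (by $\xi_1+\xi_2$ in the second case, by $\xi_1+\xi_3$ in the third) is then performed on the difference $g(\xi_1,\xi_2)-g(\xi_3,-\xi_1-\xi_2-\xi_3)$, whose vanishing on the relevant hyperplane together with the dyadic smoothness of each $g$ gives \eqref{b4bd} without ever forming a third-order divided difference of $\phi$ across the origin. To complete your proof you would need to supply this, or an equivalent, case analysis explicitly.
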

\begin{proof}
On $P_4$, we have the factorization
\[
\xi_1^3+\xi_2^3+\xi_3^3+\xi_4^3 = (\xi_1+\xi_2)(\xi_1+\xi_3)(\xi_2+\xi_3) \,.
\]
Let $N_j \geq M$ denote the dyadic zone of $|\xi_j|$ (as before the
$M$ dyadic zone includes all frequencies below $M$).  On $P_4$ we
necessarily have $N_3\sim N_4$.  If all $|\xi_j| \leq M$, then the left hand
side of \eqref{b4-eq} is zero since
$a(\xi)=\text{const}$ for $|\xi|\leq M$, we have that .  Therefore, we take $b_4 = 0 $ there
and assume $N_4\geq M$ in the remainder of the proof.  We consider
several cases.
\bigskip

  \noindent \textit{Case 1}.  $N_1\ll  N_2 \leq  N_3\sim N_4$.  Then we 
define 
\[
b_4(\xi) = - \frac{\xi_1a(\xi_1)+ \xi_2a(\xi_2)} 
{ (\xi_1+\xi_2)(\xi_1+\xi_3)(\xi_1+\xi_4)} -\frac{1}{(\xi_1+\xi_3)(\xi_1+\xi_4)}
 \frac{ \xi_3a(\xi_3) + \xi_4a(\xi_4)}
{ \xi_3+\xi_4} 
\]
Since $|\xi_1+\xi_2| \sim N_2$ and $|\xi_1+\xi_3|, |\xi_1+\xi_4| \sim N_4$,
the conclusion easily follows by taking advantage of the cancellation 
in the last fraction when $\xi_3+\xi_4=0$.

\bigskip

  \noindent \textit{Case 2}.  $N_1 \sim  N_2 \ll  N_3\sim N_4$. 
Then we have 
$|\xi_1+\xi_3|, |\xi_2+\xi_3|, |\xi_1+\xi_2+\xi_3|  \sim N_4$.
Hence we  define 
\begin{equation}\label{b4diff}
  b_4(\xi) =  \frac{\xi_1a(\xi_1)+ \xi_2a(\xi_2) + \xi_3a(\xi_3) - 
(\xi_1+\xi_2+\xi_3)  a(-\xi_1-\xi_2- \xi_3)}
  { (\xi_1+\xi_2)(\xi_1+\xi_3)(\xi_2+\xi_3)}
\end{equation}
and the only difficulty comes from the division by $\xi_1+\xi_2$.
We rewrite $b_4$ as 
\[
b_4(\xi) =  \frac{1}{(\xi_1+\xi_3)(\xi_2+\xi_3)}( g(\xi_1,\xi_2) -
g(\xi_3,-\xi_1-\xi_2-\xi_3))
\]
where the function $g$ is defined by
\[
g(\xi,\eta) = \frac{\xi a(\xi) +\eta a(\eta)}{\xi+\eta}.
\]
Since $a$ is even and satisfies \eqref{adiff}, it follows that 
$g$ is smooth on the dyadic scale and has size $\lesssim a(N)$ 
when $|\xi| \sim |\eta| \sim N$. The conclusion again follows.

\bigskip
\noindent\textit{Case 3}.  $N_1\sim N_2\sim N_3\sim N_4 \sim N$. 
Using a partition of unit on the $N$ scale and permuting the indices
we can assume that we localized the problem to a region where
$|\xi_2+\xi_3|,|\xi_1+ \xi_2+\xi_3| \sim N$.  Then we define $b_4$
using again \eqref{b4diff}, and rewrite it in the form
\[
b_4(\xi) =  \frac{1}{\xi_2+\xi_3}  \frac{g(\xi_1,\xi_2) -
g(\xi_3,-\xi_1-\xi_2-\xi_3))}{(\xi_1+\xi_3)}
\]
Now the first factor is elliptic, and in the second factor 
the numerator vanishes on $\{\xi_1+\xi_3 = 0\}$ therefore
we have again a smooth division on the dyadic scale.
\end{proof}

The next result implies the bound \eqref{E1bd}:
\begin{corollary}
  \label{C:E-bds}
Let $a \in S_M$ and $b_4$ as in Lemma~\ref{L:b4bd}.
Then
\begin{equation}
|E_1(u)| \lesssim   \|u\|_{H^a}^2 \|u\|_{H^{-\frac12}_M}^2
\end{equation}
\end{corollary}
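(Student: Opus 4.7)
The plan is to view $E_1$ as a multilinear functional defined by the symbol $b_4$ (whose size and smoothness on dyadic shells are controlled by Lemma~\ref{L:b4bd}), expand $b_4$ into a tensor product via a Coifman--Meyer-style Fourier series, and then bound the resulting product integrals by H\"older and Bernstein, closing with a dyadic summation.

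First I would decompose $u = \sum_N u_N$ with $u_N = P_N u$ in the paper's dyadic convention (so that $u_M$ represents the entire low-frequency bunch $P_{\leq M} u$). Substituting into $E_1(u)$ and exploiting the symmetry of $b_4$ in $(\xi_1,\ldots,\xi_4)$, I would reduce to summing over ordered configurations $N_1 \leq N_2 \leq N_3 \leq N_4$; the constraint $\xi_1+\xi_2+\xi_3+\xi_4 = 0$ on $P_4$ then forces $N_3 \sim N_4$. On each shell $\{|\xi_j| \sim N_j\}$, Lemma~\ref{L:b4bd} gives $|b_4| \lesssim a(N_2) N_4^{-2}$ together with derivative bounds at the dyadic scale. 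Expanding $b_4$ as a rapidly convergent Fourier series in the dyadic box produces a representation $b_4 = \sum_{k \in \mathbb Z^4} c_k \prod_j \phi_{k_j}(\xi_j/N_j)$ with $\sum_k |c_k| \lesssim a(N_2) N_4^{-2}$, which by Plancherel converts the piece of $E_1$ at scale $(N_1,\ldots,N_4)$ into a weighted sum of integrals of the form $\int \prod_j (T_j u_{N_j})(x)\, dx$ for bounded Fourier multipliers $T_j$.

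Next I would estimate each such integral by H\"older with exponents $(\infty,\infty,2,2)$ on the slots $(N_1, N_2, N_3, N_4)$, followed by the 1D Bernstein inequality $\|P_N f\|_{L^\infty} \lesssim N^{1/2} \|P_N f\|_{L^2}$ on the two lowest-frequency factors. Summing over $k$ and the dyadic configurations then yields
\[
|E_1(u)| \lesssim \sum_{N_1 \leq N_2 \leq N_3 \sim N_4} a(N_2)\, N_4^{-2}\, N_1^{1/2} N_2^{1/2} \prod_{j=1}^4 \|u_{N_j}\|_{L^2}.
\]

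The final step is to match this sum with $\|u\|_{H^a}^2 \|u\|_{H^{-1/2}_M}^2$. Setting $\alpha_N = (N^2 + M)^{-1/4} \|u_N\|_{L^2}$ and $\beta_N = a(N)^{1/2} \|u_N\|_{L^2}$ (so that $\|\alpha\|_{\ell^2}^2 = \|u\|_{H^{-1/2}_M}^2$ and $\|\beta\|_{\ell^2}^2 \sim \|u\|_{H^a}^2$), the effective summation kernel becomes
\[
\tilde c(N_1, N_2, N_3, N_4) = \frac{a(N_2)}{a(N_4)}\, \frac{N_1^{1/2} N_2^{1/2} (N_1^2 + M)^{1/4} (N_2^2 + M)^{1/4}}{N_4^2}.
\]
Using $(N^2 + M)^{1/4} \lesssim N^{1/2}$ in the relevant range $N \geq M$ together with the decay property (iii) of $a \in S_M$, the ratio $a(N_2)/a(N_4)$ can grow at most polynomially in $N_4/N_2$ of order strictly less than $2$; hence $\tilde c$ remains bounded and has geometric decay off the near-diagonal $N_1 \sim N_2 \sim N_4$. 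An application of Schur's test (or iterated Cauchy--Schwarz across the three dyadic scales $N_1, N_2, N_4$) then closes the estimate. The main obstacle is the near-diagonal regime $N_1 \sim N_2 \sim N_3 \sim N_4$ where no geometric decay is available; this is handled by the elementary inequality $\sum_N \alpha_N^2 \beta_N^2 \leq \|\alpha\|_{\ell^2}^2 \|\beta\|_{\ell^2}^2$. The corner case $N_1 = M$ (the low-frequency bunch) is only cosmetic, since $\|u_{\leq M}\|_{L^\infty} \lesssim M^{1/2} \|u_{\leq M}\|_{L^2}$ and $\|u_{\leq M}\|_{L^2} \lesssim M^{1/2} \|u_{\leq M}\|_{H^{-1/2}_M}$ preserve the same numerology.
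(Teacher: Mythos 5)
Your proof is correct and follows essentially the same route as the paper's: reduce to a pointwise bound on $b_4$ on dyadic shells via Lemma~\ref{L:b4bd} (the paper asserts this reduction directly, you justify it by separation of variables as the paper itself does later for $R_6$), then apply H\"older with Bernstein on the two lowest frequencies and close by a Schur-type dyadic summation. The only difference is bookkeeping: the paper places the two low-frequency factors in $H^a$ and the two high ones in $\dot H^{-1/2}$, while you do the reverse; both assignments yield a summable kernel since property \eqref{asize} limits the growth of $a(N_2)/a(N_4)$ to order at most one in $N_4/N_2$.
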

\begin{proof}
Given the expression of $b_4$, it suffices to prove this 
when $\hat u$ is positive and $b_4$ is estimated pointwise by 
\eqref{b4bd}. Using again the notation $u_N = P_N u$ 
for $N > M$ and $u_M = P_{\leq M} u$,
by Bernstein's inequality  we have
\[
\begin{split}
  |E_1(u)| \lesssim & \ \sum_{M \leq N_1 \leq N_2 \leq N_3 \sim N_4}
  \frac{a(N_2)}{N_4^{2}} \| u_{N_1} u_{N_2} u_{N_3} u_{N_4} \|_{L^1}
  \\
  \lesssim & \ \sum_{M \leq N_1 \leq N_2 \leq N_3 \sim N_4}
  \frac{a(N_2) N_1^\frac12 N_2^{\frac12}}{N_4^{2}} \| u_{N_1}\|_{L^2}
  \| u_{N_2} \|_{L^2} \| u_{N_3} \|_{L^2} \| u_{N_4}\|_{L^2} \\
  = & \ \sum_{M \leq N_1 \leq N_2 \leq N_3 \sim N_4}\left( \frac{
      a(N_2) N_1}{a(N_1) N_2}\right)^{\frac12} \frac{N_2}{N_4} \|
  u_{N_1} \|_{H^a} \| u_{N_2} \|_{H^a} \| u_{N_3} \|_{\dot
    H^{-\frac12}} \| u_{N_4} \|_{\dot H^{-\frac12}}
\end{split}
\]
and the summation with respect to the $N_i$'s is now straightforward.
\end{proof}

We conclude the proof of Proposition~\ref{P:energybound}
with 

\begin{proof}[Proof of the estimate \ref{R6bd}]
Writing $\xi=\xi_4+\xi_5+\xi_6$ as the frequency decomposition in
the cubic product we write $R_6(u)$ in the form
\[
R_6(u) = \mp \frac14 \int_{P_6}i\xi b_4(\xi,\xi_1,\xi_2,\xi_3) 
\hat u(\xi_1) \hat u(\xi_2) \hat u(\xi_3)\hat u(\xi_4) \hat u(\xi_5)
\hat u(\xi_6) \, d\sigma \,.
\]
where $P_6 = \{ \xi_1+ \xi_2+\xi_3+ \xi_4 +\xi_5 +\xi_6 = 0\}$.
  For $b_4$ we use the extension given
by Lemma~\ref{L:b4bd}. Since this extension is smooth in all 
variables on the dyadic scales,  without any restriction we can separate
variables and reduce the problem to the case when $b_4$ is of  product
type.  Then we can return to the physical space and rewrite
\[
R_6(u) = \sum_{N,N_2,\cdots, N_7} N b_4(N,N_1,N_2,N_3)  \int 
u_{N_1} u_{N_2} u_{N_3} P_{N}( u_{N_4} u_{N_5} u_{N_6} ) dx, \quad 
u_{N_i} = P_{N_i} u
\]
where the factors in $b_4$ are harmlessly included in the spectral 
projectors. This is allowed because $L^2$ bounded multipliers are 
also bounded in $U^2_A$ spaces.

By symmetry we can assume that $N_1 \leq N_2 \leq N_3$,
as well as $N_4 \leq N_5 \leq N_6$.  We also take an increasing
rearrangement 
\[
\{ N_1,N_2,N_3,N_4,N_5,N_6\} = \{ M_1,M_2,M_3,M_4,M_5,M_6\}
\]
where we must always have $N \lesssim M_5 \sim M_6$.

Our next contention is that we can harmlessly discard the 
projector $P_N$ by separating variables. To see this we 
use the Fourier representation of the symbol
\[
p_N(\xi_1+\xi_2+\xi_3) = \int e^{i \lambda \xi_1} e^{i \lambda \xi_2}
 e^{i \lambda \xi_3} f(\lambda) d\lambda, \qquad  f_N(\lambda) = \int e^{-i \lambda \xi} p_N(\xi) d\xi
\]
The complex exponentials are bounded symbols and thus bounded on $U^2_A L^2$,
while $\| f_N\|_{L^1} \lesssim 1$ uniformly in $N$.

Assuming now that we have separated variables, we can
sum the coefficient in $R_6$ with respect to $N$ 
\[
\sum_{N \leq N_3} N b_4(N,N_1,N_2,N_3) \sim a(N_2) N_3^{-1} \lesssim a(M_2)
M_3^{-1}
\]
and we are left with having to estimate
\[
I = \sum_{M_1 \leq \cdots \leq M_5 = M_6} a(M_2) M_3^{-1} \int_{0}^1
\int_{\mathbb R} u_{M_1} u_{M_2} u_{M_3} u_{M_4} u_{M_5} u_{M_6} dx dt
\]
We divide the time interval $[0,1]$ in $M_6^{1-4s}$ subintervals of
size $M_6^{-1+4s}$ corresponding to the highest frequency factor. We
estimate the integral in each such subinterval, taking a loss of
$M_6^{1-4s}$ due to the interval summation.  Depending on how many
frequency $M_6$ factors there are we split into several cases:

\smallskip 
\noindent \textit{Case (a).} $M_4 \ll M_6$. Then we can use 
 two bilinear $L^2$ bounds for the products $u_3 u_5$ and $u_4 u_6$
and Bernstein to derive a pointwise bound for $u_{N_1}$ and $u_{N_2}$.
We obtain
\[
\begin{split}
|I_{(i)(a)} | \lesssim &  \sum_{M_1 \leq \cdots \leq M_5 = M_6} 
a(M_2) M_3^{-1} M_6^{1-4s}  M_6^{-2} M_1^{\frac12} M_2^\frac12
\sup_{|I| =M_6^{-1+4s}}  \prod_{j=1}^6  \|\chi_I u_{M_j}\|_{U^2_A L^2}
\\
 \lesssim & \sum_{M_1 \leq \cdots \leq M_5 = M_6} 
\!\! \left(\frac{a(M_2)M_1}{a(M_1) M_2}\right)^{\frac12}
  \frac{M_2}{M_3}  \frac{   M_6^{s}}{M_3^{s}}
 \frac{M_6^{s}}{M_4^{s}}   M_6^{-1-8s}
\prod_{j=1}^2  \|u_{M_j}\|_{X^a} \prod_{j=3}^6  \|u_{M_j}\|_{X^s_M}
\end{split}
\]
where the factors were reorganized to make clear the summation with
respect to the $M_j$'s. It is also transparent here that the total
balance of exponents can only be favorable if $s \geq -\frac18$.

\smallskip
\noindent \textit{Case (b).} $M_3 \ll M_4 \sim M_6$. The same
argument as above applies after observing that two of the frequencies 
$\xi_4$, $\xi_5$ and $\xi_6$ must have an $M_6$ separation,
therefore the bilinear $L^2$ estimate can be applied.

\smallskip
\noindent \textit{Case (c).} $ M_3 \sim M_6$.  As in the
previous case we can apply the $L^2$ bilinear estimate for two of the
high frequency factors, say $u_{M_5} u_{M_6}$.  Then we use the $L_x^4
L_t^\infty$ bound for $u_{M_2}$ and $u_{M_3}$, the $L_x^\infty L_t^2$
for $u_{M_4}$ as well as the $L^\infty$ bound for $u_{M_1}$. We obtain
\[
\begin{split}
  |I_{(i)(a)} | \lesssim & \sum_{M_1 \leq M_2 \leq M_3 = \cdots = M_6}
\!\!\!\!\!\!\!  
a(M_2) M_6^{-1} M_6^{1-4s} M_6^{-1} M_1^{\frac12} M_2^{\frac14}
  M_6^{-\frac34}\sup_{|I| =M_6^{-1+4s}}  \prod_{j=1}^6 
\|\chi_I u_{M_j}\|_{U^2_A L^2}
  \\
  \lesssim & \sum_{M_1 \leq M_2 \leq M_3 = \cdots = M_6} \!\!
\left(\frac{a(M_2)M_1}{a(M_1) M_2}\right)^{\frac12}
  \left(\frac{M_2}{M_3}\right)^\frac34   M_6^{-1-8s}
  \prod_{j=1}^2 \|u_{M_j}\|_{X^a} \prod_{j=3}^6
  \|u_{M_j}\|_{X^s_M}
\end{split}
\]
Again the summability with respect to $M_j$'s is straightforward.

\end{proof}

\section{Proof of Theorem \ref{T:main}}
\label{S:proof}

For expositional convenience, in this section, we will assume again
that we are in the more difficult case $s < 0$.  We first establish
a short time small data result:

\begin{proposition} \label{P:final}
  Let $M \geq  1$ and $-\frac14 \leq s < 0$. For any initial
  data $u_0\in \mathcal{S}$ with
\[
\|u_0\|_{H^s_M} \ll 1 \, ,
\]
the unique solution $u\in C([0,1]; \mathcal{S})$ to \eqref{E:mKdV}
(focusing or defocusing) satisfies
\[
\|u\|_{L_{[0,1]}^\infty H_M^s} \leq C \|u_0\|_{H^s_M} \, .
\]
\end{proposition}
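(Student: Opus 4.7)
The plan is to combine the three main a priori estimates established in the preceding sections via a standard bootstrap/continuity argument. The three inputs are the linear equation estimate (Lemma~\ref{L:equationestimate})
\[\|u\|_{X^s_M} \lesssim \|u\|_{\ell^2 L^\infty H^s_M} + \|\partial_x(u^3)\|_{Y^s_M},\]
the trilinear estimate (Proposition~\ref{P:trilinear})
\[\|\partial_x(u^3)\|_{Y^s_M} \lesssim \|u\|_{X^s_M}^3,\]
and the energy bound (Proposition~\ref{P:energybound}). For the latter it is important to work with the sharper form \eqref{energy} containing the initial-data term, applied to each of the dyadic weights $a_N$ and then summed in $N$; this yields
\[\|u\|_{\ell^2 L^\infty H^s_M}^2 \lesssim \|u_0\|_{H^s_M}^2 + \|u\|_{\ell^2 L^\infty H^s_M}^4 + \|u\|_{X^s_M}^6.\]

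Set $\epsilon = \|u_0\|_{H^s_M} \ll 1$, and for $T\in [0,1]$ define
\[F(T) := \|u\|_{X^s_M([0,T])} + \|u\|_{\ell^2 L^\infty_{[0,T]} H^s_M}.\]
For Schwartz data the classical global $H^s$ theory ($s \geq \tfrac14$) provides a smooth solution on $[0,1]$, and shortening the outer interval $[0,T]$ only restricts the family of admissible subintervals in the $X^s_M$ norm, so $F$ is monotone and continuous in $T$ with $F(0) \lesssim \epsilon$. Fixing a large constant $K$, set
\[T^* := \sup\{T\in[0,1] : F(T) \leq K\epsilon\}.\]
On $[0,T^*]$, combining the equation estimate with the trilinear bound and using $F(T^*) \leq K\epsilon \ll 1$ to absorb the cubic term gives $\|u\|_{X^s_M} \lesssim \|u\|_{\ell^2 L^\infty H^s_M}$. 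Substituting into the summed energy bound and again absorbing the higher-order terms forces $F(T^*) \leq C_0\epsilon$ for some universal $C_0$. Choosing $K = 2C_0$ makes this strictly better than the defining threshold, so continuity rules out $T^* < 1$. The conclusion then follows from the trivial embedding $\|u\|_{L^\infty H^s_M} \leq \|u\|_{\ell^2 L^\infty H^s_M} \leq F(1) \leq K\epsilon$.

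The main obstacle is a slight mismatch in the $s$-ranges: the hypothesis reads $-\tfrac14 \leq s < 0$, but the trilinear estimate requires $s > -\tfrac17$ and the energy bound requires $s \geq -\tfrac18$, so in practice the argument covers only $-\tfrac18 \leq s < 0$, which is the relevant range for Theorem~\ref{T:main}. A secondary technical point is the careful formulation of $F(T)$ respecting the particular definition of $X^s_M$ via suprema over subintervals of length $N^{4s-1}$; the monotonicity needed to run the continuity argument relies on the observation that subintervals of $[0,T]$ are \emph{a fortiori} subintervals of $[0,1)$, so shrinking $T$ shrinks the set over which the supremum is taken.
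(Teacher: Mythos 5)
Your proof is correct and follows essentially the same route as the paper: it combines Lemma~\ref{L:equationestimate}, Proposition~\ref{P:trilinear} and the energy bound of Proposition~\ref{P:energybound} and closes the resulting system of inequalities by a continuity argument, the only difference being that the paper runs the continuity argument in the amplitude parameter $h$ of the family of data $u_{0h}=hu_0$ (so that $\|u_h\|_{X^s_M}$ is manifestly a continuous function of the parameter, vanishing at $h=0$) rather than in the time $T$, which spares the technical verification of continuity and monotonicity of the truncated norms. Your observation about the stated range $-\frac14\leq s<0$ is well taken: the argument only applies where Propositions~\ref{P:trilinear} and~\ref{P:energybound} are available, i.e.\ for $s\geq-\frac18$, which is all that is used in the proof of Theorem~\ref{T:main}.
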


\begin{proof}
  For $h \in [0,1]$ let $u_h$ be the global solution to \eqref{E:mKdV}
  with initial data $u_{0h}=hu_0$. By Lemma
  \ref{L:equationestimate} and the trilinear estimate (Prop.\
  \ref{P:trilinear}),
\begin{equation}
  \label{E:step1}
  \|u_h \|_{X^s_M} \lesssim 
\|u_h \|_{\ell^2L_{[0,1)}^\infty H_M^s} +  \|u_h \|_{X^s_M}^3 \,.
\end{equation}

By the energy bound  in Prop. \ref{P:energybound}
 we have
\begin{equation}
  \label{E:step2}
  \|u_h \|_{l^2 L_{[0,1)}^\infty H^s_M}^2   
\lesssim \|u_{h0}\|_{H^s_M}^2  +
\|u_h \|_{X^s_M}^4  + \|u_h \|_{X^s_M}^6 \,.
\end{equation}
Combining \eqref{E:step1} and \eqref{E:step2}, we obtain
\[
\|u_h \|_{X^s_M} \leq C( h \| u_0\|_{H_M^s} +
\|u_h \|_{X^s_M}^3)
\] 
Since $\| u_0\|_{H_M^s} \ll 1$ and $\|u_h \|_{X^s_M}$ is a continuous 
function of $h$ vanishing at $h=0$, we conclude via a continuity
argument that 
\[
\|u_h \|_{X^s_M} \lesssim  h\| u_0\|_{H_M^s}, \qquad h \in [0,1]
\]
Returning to \eqref{E:step2}, it follows that 
\[
\| u\|_{L^\infty H^s_M} \lesssim \|u_0\|_{ H^s_M}
\]
The proof is concluded.
\end{proof}

Given Proposition~\ref{P:final}, we can conclude the proof of 
 Theorem \ref{T:main} using a scaling argument. Let 
$0 \geq s > -\frac18$
and $u_0 \in H^s$ with $\| u_0 \|_{H^s} \leq R$. Then we have
\[
\|  u_0 \|_{H^{-\frac18}_{M}} \leq  R M^{-\frac18 - s}, \qquad M \geq 1
\]
Let $u_{0\lambda}(x) = \lambda u_0(\lambda x)$ and $u_\lambda(x,t) =
\lambda u(\lambda x,\lambda^3t)$.  Then $u_\lambda$ solves
\eqref{E:mKdV} with initial data $u_{0\lambda}$.  We consider
$u_\lambda$ on the time interval $[0,1)$, with $\lambda$ to be chosen
below. We have
\[
\|u_{\lambda 0}\|_{H^{-\frac18}_{\lambda M}} \leq 
\lambda^{\frac38}\|u_0\|_{H^{-\frac18}_M} \leq
 \lambda^{\frac38} R  M^{-\frac18 - s}, \qquad M, \lambda M > 1
\]
Taking $\lambda$ such that $ \lambda^{\frac38} R  M^{-\frac18 - s} \ll 1$
we can apply Proposition~\ref{P:final} to conclude that
\[
\|u_{\lambda}\|_{L^\infty_{[0,1]} H^{-\frac18}_{\lambda M}}  
\lesssim\|u_{0\lambda}\|_{ H^{-\frac18}_{\lambda M}}.  
\]
Scaling back to the interval $[0,T]$ with $T = \lambda^3$
we obtain
\[
\|u\|_{L^\infty_{[0,T]} H^{-\frac18}_{M}}  
\lesssim \|u_0\|_{H^{-\frac18}_{M}}   , \qquad T^{\frac18} R  M^{-\frac18 - s} \ll 1
\]
The last restriction gives a bound from below on $M$,
\[
M \gg M(R,T) \defeq  (R T^\frac18)^{(\frac18+s)^{-1}}
\]
Taking a weighted square sum with respect to such $M$
in the previous relation  we obtain
\[
\|u\|_{L^\infty_{[0,T]} H^{s}_{M(R,T)}}  
\lesssim \|u_0\|_{H^{s}_{M(R,T)}}   
\]
This in turn shows that 
\[
u\|_{L^\infty_{[0,T]} H^{s}}  
\lesssim M(R,T)^{-s}   \|u_0\|_{H^{s}}   
\]
concluding the proof of the theorem.

\end{document}